\numberwithin{equation}{section}
\newcommand{\comm}[1]{}
\newtheorem{theorem}{Theorem}
\newtheorem{definition}[theorem]{Definition}
\newtheorem{lemma}[theorem]{Lemma}
\newtheorem{question}[theorem]{Question}
\newtheorem{remark}[theorem]{Remark}
\newtheorem{proposition}[theorem]{Proposition}
\newtheorem{corollary}[theorem]{Corollary}
\newtheorem{example}[theorem]{Example}
\numberwithin{theorem}{section}
\theoremstyle{remark}
\DeclareMathOperator{\SO}{SO}
\DeclareMathOperator{\SL}{SL}
\DeclareMathOperator{\Sp}{Sp}
\DeclareMathOperator{\SU}{SU}
\DeclareMathOperator{\rank}{rank}
\DeclareMathOperator{\Nbhd}{Nbhd}
\DeclareMathOperator{\Cone}{Cone}
\newcommand{\CC}{{\mathbb{C}}}
\newcommand{\QQ}{{\mathbb{Q}}}
\newcommand{\RR}{{\mathbb{R}}}
\newcommand{\Rrank}{\rank_{\RR}}
\newcommand{\barr}{\overline}
\newcommand{\interior}[1]{%
  {\kern0pt#1}^{\mathrm{o}}%
}
 \newcounter{case}
 \renewcommand{\thecase}{\arabic{case}}
\numberwithin{equation}{section}
\begin{document}
 \title[Quasi-isometric embeddings]{Quasi-isometric embeddings of symmetric spaces and lattices: reducible case}
\author{Thang Nguyen}
    \address{Courant Institute of Mathematical Sciences, New York University, NY 10012}
 \email{tnguyen@nyu.edu}
  \email{thang.q.nguyen7@gmail.com}
\thanks{}
    \date{\today}
    \keywords{quasi-isometric embedding, rigidity, symmetric space, Euclidean building, lattice}
    \subjclass[2000]{}
    \begin{abstract}
We study quasi-isometric embeddings of symmetric spaces and non-uniform irreducible lattices in semisimple higher rank Lie groups. We show that any quasi-isometric embedding between symmetric spaces of the same rank can be decomposed into a product of quasi-isometric embeddings into irreducible symmetric spaces. We thus extend earlier rigidity results about quasi-isometric embeddings  to the setting of semisimple Lie groups. We also present some examples when the rigidity does not hold, including first examples in which every flat is mapped into multiple flats.
\end{abstract}
\maketitle

\section{Introduction}       % Enter section title between curly braces

The coarse geometry of spaces has been capturing a lot of interest in geometric group theory lately. The quasi-isometric rigidity phenomenon is looked for in many classes of spaces and groups. We first recall the notions of quasi-isometry and quasi-isometric embedding which are used to study coarse geometry.
\begin{definition}
\label{defn:qi} Let $(X,d_X)$ and $(Y,d_Y)$ be metric spaces. Given
real numbers $L{\geq}1$ and $C{\geq}0$, a map $f:X{\rightarrow}Y$ is
called an {\em $(L,C)$-quasi-isometry} if
\begin{enumerate} \item
$\frac{1}{L}d_X(x_1,x_2)-C{\leq}d_Y(f(x_1),f(x_2)){\leq}L d_X(x_1,x_2)+C$
for all $x_1$ and $x_2$ in $X$, and, \item the $C$ neighborhood of
$f(X)$ is all of $Y$.
\end{enumerate}
\noindent If $f$ satisfies $(1)$ but not $(2)$, then $f$ is called an {\em $(L,C)$-quasi-isometric
embedding}.
\end{definition}
While the set of quasi-isometry rigidity results is quite rich by now, not many results are known for quasi-isometric embeddings. A generalization of quasi-isometry rigidity to rigidity of quasi-isometric embeddings would be parallel to the generalization of Mostow rigidity to Margulis superrigidity. In this paper, continuing from \cite{FisherWhyte18} and \cite{FisherNguyen}, we study quasi-isometric embeddings between higher rank symmetric spaces and lattices. The standing assumption in this paper is that $X$ and $Y$ are symmetric spaces, Euclidean buildings or lattices of the {\bf same $\RR$-rank} with $\RR$-rank is at least two. We also assume that the Euclidean buildings are thick.

We study quasi-isometric embeddings from $X$ into $Y$. We show that studying embeddings in this general setting can be reduced to studying embeddings between symmetric spaces and Euclidean buildings where the target space is irreducible. This means we are able to group irreducible factors of $X$ into new factors with the same ranks as irreducible factors of $Y$ and the embedding decomposes as a product of embeddings from new factors of $X$ into irreducible factors of $Y$. In particular we have the following theorem.

\begin{theorem}\label{splitting-thm1}
Let $X=X_1\times \dots \times X_n$ and $Y=Y_1\times \dots\times Y_m$ be symmetric spaces or Euclidean buildings without compact and Euclidean factors, where $X_i, Y_j$ are irreducible factors. Let $f: X\to Y$ be an $(L,C)$-quasi-isometric embedding. Then there is a decomposition $X=X_1'\times \dots X_m'$ and maps $(f_1,\dots,f_m)$ where
\begin{enumerate}
\item $X_i'$ is a product of irreducible factor of $X$ and $\Rrank(X_i')=\Rrank(Y_i)$, for $i=1,\dots, m$..
\item $f_i: X_i' \to Y_i$ is a quasi-isometric embedding for $i=1,\dots, m$.
\item $f$ is at a bounded distance from $(f_1,\dots,f_m)$.
\end{enumerate}
\end{theorem}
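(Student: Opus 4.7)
The plan is to pass to asymptotic cones, where the embedding becomes bi-Lipschitz between Euclidean buildings of equal rank; identify the correct regrouping of the irreducible factors of $X$ combinatorially via the spherical-join decomposition of the Tits boundary; and then transfer the splitting back to $f$ up to bounded distance.

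\medskip

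\textbf{Step 1 (cone setup).} I would first fix a non-principal ultrafilter $\omega$ and a divergent sequence of scales and pass to the induced bi-Lipschitz embedding $f_\omega : X_\omega \to Y_\omega$ between asymptotic cones, which are Euclidean buildings of equal rank by Kleiner--Leeb. Arguing as in \cite{FisherWhyte18, FisherNguyen}, the equal-rank hypothesis forces $f_\omega$ to send apartments of $X_\omega$ into apartments of $Y_\omega$, and the induced boundary map $\partial_\infty f_\omega : \partial_T X_\omega \to \partial_T Y_\omega$ is a Tits-isometric embedding of spherical buildings.

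\medskip

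\textbf{Step 2 (regrouping via the Tits join).} The Tits boundaries decompose as spherical joins
\[
\partial_T X_\omega = \partial_T(X_1)_\omega \ast \dots \ast \partial_T(X_n)_\omega, \qquad \partial_T Y_\omega = \partial_T(Y_1)_\omega \ast \dots \ast \partial_T(Y_m)_\omega,
\]
and these decompositions are canonical: they correspond to the connected components of the spherical Coxeter diagram. Since each $\partial_T(X_i)_\omega$ is irreducible (connected diagram), its image under $\partial_\infty f_\omega$ is an irreducible spherical sub-building of $\partial_T Y_\omega$, which must lie within a single target join factor $\partial_T(Y_{j(i)})_\omega$. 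Setting $I_j = \{i : j(i) = j\}$ yields a partition of $\{1,\dots,n\}$, and matching dimensions gives $\sum_{i \in I_j} \Rrank(X_i) = \Rrank(Y_j)$. Define $X_j' = \prod_{i \in I_j} X_i$; this yields item (1).

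\medskip

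\textbf{Step 3 (splitting of $f$).} Now I would write $f = (f^1, \dots, f^m)$ in target coordinates. The assignment of Step 2 implies: if $p, p' \in X$ differ only in $X_i$-directions with $i \notin I_j$, then the asymptotic displacement of $f(p)$ from $f(p')$ has no $\partial_T(Y_j)_\omega$-component, so $d_{Y_j}(f^j(p), f^j(p'))$ is bounded by a constant $C'=C'(L,C)$. Thus $f^j$ is coarsely constant on the fibers of $X \to X_j'$ and descends to a map $f_j : X_j' \to Y_j$ well-defined up to bounded distance. The rank identity of Step 2 ensures that no nontrivial direction of $X_j'$ is coarsely collapsed, so $f_j$ inherits the quasi-isometric embedding property, and $f$ is at bounded distance from $(f_1, \dots, f_m)$ by construction.

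\medskip

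\textbf{Main obstacle.} The delicate point is the globalization in Step 2: each individual apartment gives a local matching of join factors, and one must rule out the possibility that different apartments produce different assignments $j(i)$. This requires exploiting the connectedness of the space of apartments (any two are joined by a chain sharing common Weyl chambers) together with the uniqueness of the join decomposition of spherical buildings, to promote the pointwise assignment to a single global partition. The combinatorial heart of the argument is the claim that an irreducible sub-building of a spherical join must lie in a single join factor, which follows from connectedness of Coxeter diagrams but must be verified carefully for the image of each $\partial_T(X_i)_\omega$.
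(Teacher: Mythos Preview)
Your Step~1 contains a genuine gap that undermines the rest of the argument. The claim that equal rank forces $f_\omega$ to send apartments of $X_\omega$ into apartments of $Y_\omega$ is false in general: what Kleiner--Leeb gives is only that a bi-Lipschitz flat is contained in a \emph{finite union} of flats. The paper's own examples in Theorem~\ref{non-rigid-rank1-thm} (e.g.\ $H^2\times H^2\to \SL(3,\CC)/\SU(3)$) are equal-rank quasi-isometric embeddings for which the image of every flat is at finite Hausdorff distance from a union of eight Weyl sectors, hence in the cone no flat goes to a single flat. These examples satisfy the hypotheses of Theorem~\ref{splitting-thm1} (with the trivial regrouping $X_1'=X$, $f_1=f$), so any proof must accommodate them. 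Consequently there is in general no well-defined map $\partial_\infty f_\omega$ on Tits boundaries, and your Step~2 never gets off the ground. Even in situations where flats do happen to go to flats, a bi-Lipschitz map has no reason to induce a Tits-\emph{isometric} embedding, so that assertion is also too strong.

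The paper's proof avoids this by never claiming flats go to flats. It proves instead the weaker statement that each \emph{irreducible subflat} of the cone maps entirely into a single irreducible factor of $Y_\omega$ (Theorem~\ref{splitting-thm}). This uses: (i) on a fixed flat $F$, the restriction $f_\omega|_F$ is differentiable a.e.\ with derivative a Weyl-pattern-preserving linear map; (ii) a linear-algebra analysis of such maps (Section~\ref{linear:section}, Theorem~\ref{linearmap}) showing they send irreducible factors into irreducible factors; (iii) a local-cone and connectedness argument (Lemmas~\ref{localcone-lemma} and~\ref{rk1-not-bend-lemma}) to pass from generic points to all points, treating rank-one subflats separately. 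Theorem~\ref{splitting-thm1} then follows by contradiction: if some irreducible $X_i$-subflat had projections of unbounded diameter onto two target factors, rescaling along that sequence would yield a cone map violating Theorem~\ref{splitting-thm}. Your intuition that ``irreducible pieces must land in single join factors'' is exactly the right combinatorial content (it is the spherical shadow of Theorem~\ref{linearmap}), but the mechanism you propose to access it---a global boundary map---is unavailable here.
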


By using the asymptotic cone technique, Theorem \ref{splitting-thm1} is reduced to a result on bi-Lipschitz embeddings on Euclidean buildings. To state our result in that language, we first need to recall some terminology. In each flat of a Euclidean building, each Weyl subflat is parallel to the annihilator of a finite union of roots. Each finite union of roots determines the type of the subflat. We have the following definition.

\begin{definition}
An $\RR$-branched Euclidean building is a Euclidean building in which the union of Weyl hyperplanes of each type is the whole building.
\end{definition}

Simplicial trees and standard Euclidean buildings are not $\RR$-branched buildings. Their asymptotic cones and asymptotic cones of symmetric spaces are examples of $\RR$-branched Euclidean buildings. In an $\RR$-branched building, every point in a flat is contained in Weyl hyperplanes. The main reduction of Theorem \ref{splitting-thm1} is the following theorem.

\begin{theorem}\label{splitting-thm}
Let $X=X_1\times \dots \times X_n$ and $Y=Y_1\times \dots\times Y_m$ be $\RR$-branched Euclidean buildings without Euclidean factors, where $X_i$ and  $Y_j$ are irreducible factors for $1\le i \le n$ and $1\le j\le m$. Let $f: X\to Y$ be a bi-Lipschitz embedding. Then there is a decomposition $X=X_1'\times \dots X_m'$ and $f =(f_1,\dots,f_m)$ where
\begin{enumerate}
\item $X_i'$ is a product of irreducible factors of $X$ and $\Rrank(X_i')=\Rrank(Y_i)$, for $i=1,\dots, m$,
\item $f_i: X_i' \to Y_i$ is a bi-Lipschitz embedding for $i=1,\dots, m$.
\end{enumerate}
\end{theorem}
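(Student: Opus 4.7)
The plan is to leverage flat preservation together with the $\RR$-branched structure to reduce the problem to a combinatorial matching of irreducible factors. First, I would show that each maximal flat $F \subset X$ is mapped by $f$ to within bounded Hausdorff distance of a maximal flat $F' \subset Y$. Since $\Rrank(X) = \Rrank(Y)$ and both buildings are $\RR$-branched, the image $f(F)$ is a bi-Lipschitz embedded copy of $\RR^r$ inside $Y$, and the standard flat-preservation arguments for higher rank Euclidean buildings of the same rank (in the spirit of Kleiner--Leeb) guarantee that this image lies in a bounded neighborhood of a maximal flat $F'$. Projecting then produces a bi-Lipschitz map $g_F : F \to F'$ between Euclidean spaces of equal dimension that is at bounded distance from $f|_F$.

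Next, I would analyze $g_F$ using the Weyl hyperplane structure. The $\RR$-branched hypothesis means that every point of $F$ lies on a Weyl hyperplane of each type, and similarly for $F'$. Using the local branching dichotomy at singular subspaces (the image of a point where the building branches must itself be at a branching point of $Y$), $g_F$ must send Weyl hyperplanes of $X$ to Weyl hyperplanes of $Y$ up to bounded correction. Combined with the rigidity of arrangements of affine hyperplanes in Euclidean space, this forces $g_F$ to be affine and to carry the Weyl chamber structure of $F$ to that of $F'$.

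The factor matching is then combinatorial. The Weyl group $W_X$ decomposes as $W_{X_1} \times \cdots \times W_{X_n}$, with factors acting on orthogonal subspaces of $F$ corresponding to the factors $F_i$ of $F = F_1 \times \cdots \times F_n$, and analogously $W_Y = W_{Y_1} \times \cdots \times W_{Y_m}$ acts on $F' = F'_1 \times \cdots \times F'_m$. Since conjugation by the linear part of $g_F$ respects the decomposition of a finite Coxeter group into irreducible components, there is a unique partition $\{1,\dots,n\} = \bigsqcup_{j=1}^m S_j$ such that $\prod_{i \in S_j} F_i$ maps to a flat in $F'_j$. Setting $X_j' := \prod_{i \in S_j} X_i$ gives the candidate decomposition, with $\Rrank(X_j') = \Rrank(Y_j)$ by dimension count.

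To globalize, I would show that $\{S_j\}$ does not depend on $F$: any two maximal flats can be joined by a chain of flats with large (singular) overlap, and the partition is locally constant along such a chain since the Weyl-group matchings on overlapping flats must agree. The main obstacle I anticipate is precisely this globalization step --- ensuring that the flat-by-flat factor matchings assemble consistently into a single product decomposition of $X$, so that each component $f_j : X_j' \to Y_j$ is a well-defined bi-Lipschitz embedding on the entire factor rather than only on individual flats. Once this consistency is established, the fact that $f_j$ is bi-Lipschitz on $X_j'$ follows by restricting to $X_j'$-parallel slices and using the flat-wise analysis above.
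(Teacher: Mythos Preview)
Your first step is the fatal gap: it is \emph{not} true that a bi-Lipschitz image of a maximal flat lies in a bounded neighborhood of a single maximal flat of $Y$. The Kleiner--Leeb result you invoke only gives that $f(F)$ is contained in a \emph{finite union} of flats (and is in fact a finite union of convex polyhedra); it does not collapse to one flat for embeddings. Concretely, the non-rigid embeddings constructed in Section~\ref{linear:section} onward (Theorem~\ref{non-rigid-rank1-thm}) pass to bi-Lipschitz maps on asymptotic cones---which are $\RR$-branched buildings---under which every flat is sent to a genuine union of eight Weyl sectors not contained in any single flat. So there is no flat $F'$ to project onto, no map $g_F:F\to F'$, and hence no affine comparison map or Coxeter-group conjugation argument to run. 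Everything downstream in your outline rests on this nonexistent object.

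The paper's actual mechanism is local rather than global on flats. One uses that outside a codimension-two set the restriction $f|_F$ locally lands in a flat, and that at almost every such point the derivative $Df$ exists and is a linear isomorphism preserving the Weyl pattern; Theorem~\ref{linearmap} then forces $Df$ to send each irreducible sub-pattern into a single irreducible factor of the target pattern. This gives the factor-matching only infinitesimally and generically, and one must then propagate it: for higher-rank irreducible subflats via the connectedness of the local tangent sphere (Lemma~\ref{localcone-lemma}), and for rank-one subflats by a separate combinatorial argument in the spherical link (Lemma~\ref{rk1-not-bend-lemma}), since a line can a priori ``bend'' between factors at branch points. Only after irreducible subflats are shown to land in single factors does one globalize via chains of flats sharing half-flats, which is the step you correctly flagged but is not where the real work lies.
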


We remark that a factor $f_i$ could be chosen to be isometric if a certain assumption on the Weyl root types of $X_i'$ and $Y_i$ is satisfied. However, due to the existence of quasi-isometric embeddings that are not close to isometric embeddings (see $AN$-map in \cite[Proposition 2.1]{FisherWhyte18}), and the existence of rank one factors, we can always find an example where there is a factor $f_i$ of $f$ not uniformly close to any homothetic embedding.

Combining Theorem \ref{splitting-thm1} with results from \cite{FisherWhyte18}, we can get rigidity of the quasi-isometric embeddings in the following situation.

\begin{corollary}\label{splitting-cor}
Let $X$ and  $Y$ be symmetric spaces or Euclidean building without compact and  Euclidean factors. Assume $\Rrank (X)=\Rrank (Y)$. Furthermore, we assume that the collections of type $A$ Weyl patterns of irreducible factors of $X$ and of $Y$ are the same, with multiplicity.. Let $f:X\to Y$ be a quasi-isometric embedding. Then $f$ is at a bounded distance from a product of isometric embeddings, up to rescalings, of higher rank factors and quasi-isometric embeddings of rank one factors.
\end{corollary}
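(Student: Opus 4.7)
My plan is to apply Theorem \ref{splitting-thm1} to reduce the problem to a factor-by-factor analysis, and then invoke the rigidity results of \cite{FisherWhyte18} on each factor. First I would apply Theorem \ref{splitting-thm1} to $f$, producing a decomposition $X = X_1' \times \cdots \times X_m'$ compatible with the irreducible factorization $Y = Y_1 \times \cdots \times Y_m$, together with component quasi-isometric embeddings $f_i \colon X_i' \to Y_i$ satisfying $\Rrank(X_i') = \Rrank(Y_i)$, such that $f$ is at bounded distance from $(f_1,\ldots,f_m)$. It then suffices to establish the stated rigidity separately on each factor map.

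Next I would split into cases by $\Rrank(Y_i)$. When $\Rrank(Y_i) = 1$, the rank identity forces $\Rrank(X_i') = 1$; since $X_i'$ is a product of irreducible non-Euclidean factors of total rank one, it must itself be a single irreducible rank-one factor of $X$, and $f_i$ is already a quasi-isometric embedding of a rank-one factor as required by the conclusion. For each $Y_i$ with $\Rrank(Y_i) \geq 2$, I would use the hypothesis that the multisets of type $A$ Weyl patterns of irreducible factors of $X$ and of $Y$ agree with multiplicity, together with the rank identities supplied by Theorem \ref{splitting-thm1}, to arrange the splitting so that $X_i'$ is a single irreducible factor of $X$ whose Weyl pattern matches that of $Y_i$. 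With this type-compatible matching in hand, the rigidity theorems of \cite{FisherWhyte18} for quasi-isometric embeddings between irreducible higher rank symmetric spaces or Euclidean buildings of matching Weyl type apply to each such $f_i$, yielding that $f_i$ is at bounded distance from an isometric embedding up to rescaling.

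Assembling the higher rank factor maps with the rank-one factor maps then produces the claimed product decomposition up to bounded distance. The hard part will be the type-compatibility step: Theorem \ref{splitting-thm1} guarantees only a rank-compatible match between $X_i'$ and $Y_i$, whereas applying \cite{FisherWhyte18} requires a match at the level of root systems. Promoting the rank-compatible splitting of Theorem \ref{splitting-thm1} to a type-compatible one, by reshuffling irreducible factors of $X$ among the $X_i'$ using the multiset equality of type $A$ Weyl patterns (and using that there are no Euclidean factors to absorb extra rank), is where the hypothesis of the corollary is genuinely used and constitutes the main technical content beyond what Theorem \ref{splitting-thm1} already provides.
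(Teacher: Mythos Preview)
Your overall architecture is right: apply Theorem~\ref{splitting-thm1}, then invoke \cite{FisherWhyte18} factor by factor. The gap is in the middle step. The decomposition $X = X_1' \times \cdots \times X_m'$ produced by Theorem~\ref{splitting-thm1} is \emph{determined by the map $f$}; there is no freedom to ``reshuffle'' irreducible factors of $X$ among the $X_i'$. So the task is not to arrange a type-compatible splitting, but to prove that the splitting forced on you already has the properties needed to apply \cite{FisherWhyte18}.

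More seriously, the target property you aim for---that each $X_i'$ be a single irreducible factor with the same Weyl pattern as $Y_i$---is too strong and need not hold. The hypothesis only matches the multisets of type~$A$ factors; it says nothing about the non-type-$A$ factors. For instance, nothing in the hypothesis excludes a situation like $X$ of type $D_4 \times D_4$ and $Y$ irreducible of type $E_8$ (both sides have empty type-$A$ multiset, equal rank), where $X_1' = X$ is reducible and certainly does not have the same pattern as $Y_1$. The rigidity result you need from \cite{FisherWhyte18} (their Theorem~1.8) does not ask for matching Weyl type: it asks that every Weyl-pattern-preserving linear map from the pattern of $X_i'$ into that of $Y_i$ be \emph{conformal}. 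That is the condition you must verify.

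The paper's route is exactly this. Passing to asymptotic cones, one uses the classification in \cite[Section~4]{FisherWhyte18}: a pattern-preserving linear map into an irreducible target can fail to be conformal only when the domain has a type-$A$ factor and the target is either not type~$A$ or is type~$A$ of a different rank. Since non-type-$A$ patterns admit no pattern-preserving linear embedding into a type-$A$ target, every irreducible piece of $X_i'$ is of type~$A$ whenever $Y_i$ is of type~$A$; a rank count against the hypothesis then forces $X_i'$ itself to be $A_k$ when $Y_i$ is $A_k$, and forces $X_i'$ to contain no type-$A$ factor when $Y_i$ is not of type~$A$. In either case the conformality criterion is met, and \cite[Theorem~1.8]{FisherWhyte18} applies directly---without ever needing $X_i'$ to be irreducible or to match $Y_i$'s pattern.
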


We also have a rigidity phenomenon for irreducible non-uniform lattices. First, we show that quasi-isometric embeddings of an irreducible non-uniform lattice in a semisimple, but not simple, Lie group can be reduced to the study of quasi-isometric embeddings of symmetric spaces into an irreducible space. We assume that all Lie groups in the following theorems have no Euclidean or compact factors.
\begin{theorem}\label{mainthm1}
Let $\Gamma$ be a nonuniform lattice with a trivial center in a higher rank semisimple Lie group $G$. Let $G'$ be a semisimple Lie group of the same rank as $G$ with the rank at least $2$. Assume that $G=G_1\times\dots\times G_n$ and $G'=G_1'\times \dots\times G_m'$ where $G_i$ and $G_j'$ are connected simple Lie groups for $1\le i\le n$ and $1\le j\le m$. Suppose the collections of type $A$ Weyl patterns of irreducible factors of $G$ and of $G'$ are the same, with multiplicity. Furthermore, we assume that $G'$ is not simple, i.e., $m\ge 2$. Let $\varphi: \Gamma\to G'$ be a quasi-isometric embedding. Then there exists a decomposition $G=G_1''\times\dots\times G_m''$, and embeddings $f_i: G_i''\to G_i'$ such that
\begin{enumerate}
\item $\Rrank(G_i'')=\Rrank(G_i')$,
\item $f_i$ are quasi-isometric embeddings, for all $1\le i\le m$,
\item $f=(f_1,\dots,f_m)|_\Gamma$ is at a bounded distance from $\varphi$.
\end{enumerate}
Furthermore, $f_i$ is an isometric embedding, up to rescaling, if $\Rrank(G_i')>1$.
\end{theorem}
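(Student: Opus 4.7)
The plan is to prove the theorem in three steps. Let $X = G/K$ and $Y = G'/K'$ be the Riemannian symmetric spaces associated with $G$ and $G'$, with irreducible decompositions $X = X_1 \times \cdots \times X_n$ and $Y = Y_1 \times \cdots \times Y_m$ corresponding to the simple factors. Step one is to extend $\varphi$ to a quasi-isometric embedding $\tilde\varphi : X \to Y$. Step two is to apply Theorem~\ref{splitting-thm1} to split $\tilde\varphi$ as a product $(f_1,\ldots,f_m)$. Step three is to apply Corollary~\ref{splitting-cor} to upgrade each higher-rank factor $f_i$ to an isometric embedding up to rescaling. Restricting to $\Gamma$ will then yield the required map $f$.

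For the extension, since $\Gamma$ is a non-uniform higher-rank lattice, the Lubotzky--Mozes--Raghunathan theorem gives that the inclusion $\Gamma \hookrightarrow G$ is a quasi-isometric embedding when $\Gamma$ carries its word metric, and moreover that $\Gamma$ is coarsely dense in a \emph{neutered space} $X^0 \subset X$ obtained by removing a $\Gamma$-invariant disjoint family of horoball neighborhoods of the cusps. Hence $\varphi$ extends straightforwardly to a quasi-isometric embedding on $X^0$. To fill in across each horoball, one uses that the cusp stabilizer is a uniform lattice in the associated horospherical (unipotent) subgroup $N$, together with the retraction of the horoball onto its boundary along $N$-orbits. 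This extension is carried out in \cite{FisherNguyen} in the case that $G'$ is simple; since the construction takes place in the source and depends only on coarse density, cusp geometry, and the quasi-isometric embedding property of $\varphi$, the same argument produces a quasi-isometric embedding $\tilde\varphi : X \to Y$ at bounded distance from $\varphi$ on $\Gamma$.

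Applying Theorem~\ref{splitting-thm1} to $\tilde\varphi$ yields a regrouping $X = X_1' \times \cdots \times X_m'$ of the irreducible factors of $X$ with $\Rrank(X_i') = \Rrank(Y_i)$, and quasi-isometric embeddings $f_i : X_i' \to Y_i$ whose product is uniformly close to $\tilde\varphi$. Because each irreducible factor of $X$ is the symmetric space of a simple factor of $G$, this induces a decomposition $G = G_1'' \times \cdots \times G_m''$ into products of simple factors with $\Rrank(G_i'') = \Rrank(G_i')$. The type-$A$ Weyl-pattern hypothesis passes to the refined factorizations $X_i' \to Y_i$, so Corollary~\ref{splitting-cor} applies on each factor: when $\Rrank(Y_i) > 1$, the map $f_i$ is at bounded distance from an isometric embedding up to rescaling. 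Restricting $(f_1,\ldots,f_m)$ to $\Gamma$ and using that $\tilde\varphi$ is at bounded distance from $\varphi$ on $\Gamma$ then completes the proof.

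The principal obstacle is the extension step: unlike in the uniform case, $\Gamma$ fails to be coarsely dense in $X$ because of the cusps, so one must extend $\varphi$ over each horoball using the unipotent structure of the cusp stabilizer and the coarse behavior of $\varphi$ along horospherical directions. This is the technical heart of the argument, and the key verification is that it does not rely on the target being irreducible --- which it does not, since the construction takes place in $X$ and $Y$ and only uses that $\varphi$ is a quasi-isometric embedding.
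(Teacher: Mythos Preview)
The gap is in your extension step. The paper itself flags exactly this obstacle at the start of Section~4: ``the composition of this map and nearest point projection does not give a quasi-isometric embedding of the whole symmetric space. Thus, we can not directly use Corollary~\ref{splitting-cor}.'' Your proposal asserts that \cite{FisherNguyen} supplies a genuine quasi-isometric extension $\tilde\varphi:X\to Y$, but it does not. What \cite{FisherNguyen} actually provides (and what the present paper imports as Proposition~\ref{prop:sublinearflat}) is a full-measure family $\mathcal F$ of \emph{sub-$\theta_\delta$-diverging} flats on which the induced map is only \emph{sublinearly} controlled; on such flats one gets a bi-Lipschitz map after passing to asymptotic cones, but there is no global $(L,C)$-quasi-isometric embedding defined on all of $X$. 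The horoball retraction you invoke is exponentially distorting in higher rank: two points deep in a horoball can be close in $X$ while their retractions to the horosphere are arbitrarily far apart, so composing the retraction with $\varphi$ destroys the lower quasi-isometry bound. (Note also that \cite{FisherNguyen} treats the case where the \emph{source} group $G$ is simple, not the target $G'$; even there no global extension is produced.)

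Because the extension fails, Theorem~\ref{splitting-thm1} and Corollary~\ref{splitting-cor} are not directly applicable, and the paper instead argues as follows. One works only on flats $F\in\mathcal F$; Proposition~\ref{flat2flat-in-cones} combines the sublinear control with the splitting argument of Section~\ref{Decomposition-section} in the asymptotic cone to show $[\varphi]|_{[F]}$ is a product map sending $[F]$ to a flat. This yields a decomposition $G=G_1''\times\cdots\times G_m''$ that is shown to be independent of $F\in\mathcal F$ by connecting flats through chains in $\mathcal F$. Then Proposition~\ref{image_of_flat:prop} (sublinear tracking of flats, chambers, and subflats) and the ergodicity of the $\widehat{G_i''}$-action on $\Gamma\backslash G$ are used, via Lemma~\ref{proj-estimate-lemma}, to build each $f_i:G_i''\to G_i'$ directly from projections of $\varphi$-images of points in $\Gamma\Omega_\delta$. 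The irreducibility of $\Gamma$ is essential here: it guarantees both that $\mathrm{proj}_{G_i''}(\Gamma\,\mathrm{Nbhd}_1(\Omega_\delta))=G_i''$ and that the relevant abelian and $\widehat{G_i''}$-actions are ergodic, so that enough good flats exist to make $f_i$ well defined and quasi-isometric. Your plan bypasses all of this machinery by assuming an extension that does not exist.
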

This helps us obtain rigidity of quasi-isometric embeddings of irreducible non-uniform lattices in semi-simple groups.

\begin{theorem}\label{mainthm2}
Let $\Gamma$ and $ \Lambda$ be irreducible nonuniform lattices in higher rank semisimple Lie groups $G$ and $G'$ of the same rank and let the rank be at least $2$. Assume that the centers of $\Gamma$ and $\Lambda$ are trivial and $G$ and $G'$ have no compact or rank one factors. Furthermore, we assume:
 \begin{enumerate}
 \item The collections of type $A$ Weyl patterns of irreducible factors of $G$ and of $G'$ are the same, with multiplicity, and
\item There is no closed subgroup $G < H <  G'$ with compact $H$-orbit on $\Lambda\backslash G'$.
 \end{enumerate}
If $\varphi: \Gamma\to\Lambda$ is a QI-embedding, then $\varphi$ is at a bounded distance from a homomorphism $\Gamma' \to\Lambda$ where $\Gamma'<\Gamma$ has finite index.
\end{theorem}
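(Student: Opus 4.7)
The plan is to reduce to Theorem~\ref{mainthm1} and then upgrade the factor-wise isometric map on $\Gamma$ to an honest homomorphism into $\Lambda$ on a finite-index subgroup, using Ratner's theorem together with hypothesis~(2).

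First, compose $\varphi$ with the inclusion $\Lambda\hookrightarrow G'$ to obtain a quasi-isometric embedding $\Gamma\to G'$. Since $G$ and $G'$ have no rank-one factors, every irreducible factor $G_j'$ of $G'$ has real rank at least two, and the ``furthermore'' clause of Theorem~\ref{mainthm1} produces a decomposition $G=G_1''\times\dots\times G_m''$ with maps $f_i\colon G_i''\to G_i'$ each isometric up to rescaling. An isometric (up to rescaling) embedding between semisimple symmetric spaces of higher rank is induced by an injective Lie algebra homomorphism respecting the Cartan decomposition, hence by an injective Lie group homomorphism; assembling factorwise produces a closed injective homomorphism $f\colon G\to G'$ with image $H:=f(G)$, and $f|_\Gamma$ sits at bounded distance from $\varphi$. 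Combined with $\varphi(\Gamma)\subseteq\Lambda$, this gives $f(\Gamma)\subseteq\Lambda B$ for some compact $B\subseteq G'$, so the orbit $f(\Gamma)\cdot[e]$ is precompact in $\Lambda\backslash G'$.

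The crux is to deduce that $\Lambda\cap H$ is a lattice in $H$ commensurable with $f(\Gamma)$. I would apply Ratner's orbit closure theorem: since $H$ is semisimple without compact factors, the $H$-orbit closure of $[e]$ in $\Lambda\backslash G'$ is the homogeneous set $\Lambda L/\Lambda$ for some closed $L$ with $H\subseteq L\subseteq G'$ and $\Lambda\cap L$ a lattice in $L$. I expect to show that the precompactness of $f(\Gamma)\cdot[e]$, coupled with $f(\Gamma)$ being a lattice in $H$, forces the $L$-orbit itself to be compact; hypothesis~(2) then excludes $H\subsetneq L\subsetneq G'$, while $L=G'$ is excluded because $\Lambda\backslash G'$ is noncompact by nonuniformity of $\Lambda$. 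Consequently $L=H$, so $\Lambda H$ is closed in $G'$ and $\Lambda\cap H$ is a lattice in $H$. Transferring the precompactness through the proper embedding $(\Lambda\cap H)\backslash H\hookrightarrow\Lambda\backslash G'$ gives $f(\Gamma)\subseteq(\Lambda\cap H)K$ for some compact $K\subseteq H$, and the standard fact that two lattices at bounded distance inside a semisimple Lie group are commensurable then yields that $f(\Gamma)$ and $\Lambda\cap H$ are commensurable.

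Setting $\Gamma':=f^{-1}(\Lambda)\cap\Gamma$, which is finite index in $\Gamma$ by this commensurability, the restriction $f|_{\Gamma'}\colon\Gamma'\to\Lambda$ is a group homomorphism lying at bounded distance from $\varphi$, as required. The main obstacle is the Ratner step: promoting precompactness of the lattice orbit $f(\Gamma)\cdot[e]$ to compactness of the full $L$-orbit so that hypothesis~(2) can be invoked. If this direct argument proves delicate, an alternative route is to exploit Margulis arithmeticity of $\Gamma$ and $\Lambda$ to work at the level of $\QQ$-structures and commensurators instead.
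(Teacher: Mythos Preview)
Your reduction to Theorem~\ref{mainthm1} matches the paper's approach when $G'$ is not simple, but you have omitted the case where $G'$ is simple: Theorem~\ref{mainthm1} explicitly assumes $m\ge 2$, so it cannot be invoked there. The paper handles that case by citing \cite[Theorem~1.4]{FisherNguyen} directly, and for the non-simple case it defers the passage from ``product of isometric embeddings'' to ``homomorphism into $\Lambda$'' entirely to \cite[Section~6]{FisherNguyen} (noting that one may need to conjugate by a commensurator of $\Lambda$). So the paper does not carry out the Ratner argument you sketch.

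Your Ratner step has a genuine gap, and in fact an internal inconsistency. The claim that precompactness of $f(\Gamma)\cdot[e]$ together with $f(\Gamma)$ being a lattice in $H$ forces the $L$-orbit to be compact is false: already when $H=G'$, $f=\mathrm{id}$, and $\Gamma=\Lambda$ nonuniform, the orbit $f(\Gamma)\cdot[e]$ is a single point while the $L$-orbit is all of $\Lambda\backslash G'$, which is noncompact. Worse, if your argument did yield a compact $L$-orbit and then concluded $L=H$, you would have $\Lambda\cap H$ cocompact in $H$; but $f(\Gamma)$ is nonuniform in $H$ (since $\Gamma$ is nonuniform in $G$), so $f(\Gamma)$ and $\Lambda\cap H$ could never be commensurable, contradicting your final step. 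Hypothesis~(2) is indeed tailored to exclude an intermediate Ratner hull with compact orbit, but one must first produce such a compact orbit, and precompactness of a nonuniform lattice orbit does not do this. Your proposed alternative via arithmeticity and commensurators is much closer to what \cite[Section~6]{FisherNguyen} actually supplies, and is the route you should pursue.
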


Finally, we remark that quasi-isometric embeddings are much more flexible than quasi-isometries in general. Outside of the rigid situation, not much is known about quasi-isometric embeddings. We present here some examples of non-rigid quasi-isometric embeddings.

\begin{theorem}\label{non-rigid-rank1-thm}We have the following examples
\begin{enumerate}
\item There is a quasi-isometric embedding from a product of two trees into an $A_2$-building such that there is no flat maps into a neighborhood of any flat.
\item There is  a quasi-isometric embedding $H^2\times H^2\to SL(3,\CC)/ SU(3)$ such that there is no flat maps into a neighborhood of any flat.
\end{enumerate}
\end{theorem}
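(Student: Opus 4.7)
The plan is to build both examples by twisting the Weyl pattern on a single apartment and then extending through the branching structure. For (1), let $X=T_1\times T_2$ be a product of two thick simplicial trees and let $Y$ be a thick Euclidean building of type $A_2$. Apartments of $X$ are Euclidean planes with a square-grid Weyl pattern (two perpendicular wall families), while apartments of $Y$ carry a triangular pattern (three wall families at angle $60^\circ$). Fix basepoint apartments $A_0\subset X$ and $B_0\subset Y$ and a bi-Lipschitz affine map $\phi_0:A_0\to B_0$ whose rotational component is incommensurable with $\pi/3$. Then $\phi_0$ sends both perpendicular wall directions of $A_0$ to lines transverse to every triangular wall of $B_0$, so the image of any $X$-flat through $A_0$ will fail to lie in a bounded neighborhood of any $Y$-flat.

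Extend $\phi_0$ to a map $f:X\to Y$ apartment-by-apartment. Each new apartment $A\subset X$ meets the already-mapped region in a convex subcomplex (a half-plane, strip, or quadrant). By thickness of $Y$ one can choose an apartment $B\subset Y$ containing the image of this intersection and extend $\phi_0$ affinely on $A\setminus(A\cap A_0)$. The coherence of these choices across repeated branchings is what makes $X$ a product of trees useful: walls of $X$ are shared only by apartments differing across a single branching of one of the factor trees, so the induction can be organized as a walk on the tree of apartments and a compatible $B$ always exists. Because the metrics on $X$ and on $Y$ are controlled by their apartments, $f$ is uniformly bi-Lipschitz on each apartment and therefore a quasi-isometric embedding globally. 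The non-rigidity property is automatic from the choice of rotation, since on every apartment $f$ is affine and sends wall directions to non-wall directions.

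For (2), I adapt the Fisher--Whyte $AN$-map from \cite[Proposition 2.1]{FisherWhyte18} to the rank-preserving setting. Via the Iwasawa decomposition, write $H^2\times H^2$ and $SL(3,\CC)/SU(3)$ as quasi-isometric to solvable Lie groups $S=\RR^2\rtimes\RR_+^2$ and $S'=N'\rtimes\RR_+^2$ with $N'\cong\RR^6$, carrying their left-invariant metrics. Construct a bi-Lipschitz map $S\to S'$ that, on the $\RR_+^2$-factor, is a linear twist making an irrational angle with the $A_2$ Weyl chamber walls, and on the $N$-factor selects a $2$-plane in $N'$ on which the twisted $\RR_+^2$-action is bi-Lipschitz compatible with the original $\RR_+^2$-action on $\RR^2$; such a plane exists because the characters of the $\RR_+^2$-action on $N'$ span the dual of $\RR_+^2$. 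By the irrational-angle argument of (1), no flat of $H^2\times H^2$ maps into a bounded neighborhood of any flat of $SL(3,\CC)/SU(3)$.

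The main obstacle in (1) will be organizing the inductive extension so that apartment choices cohere on multi-apartment intersections and the resulting piecewise-affine map is globally Lipschitz with uniform constants; this is where the tree-branching structure of $X$ is essential, since it guarantees that new apartments attach along half-planes rather than along more complicated convex regions. In (2) the obstacle is that the twisted $\RR_+^2$-action on the chosen $2$-plane in $N'$ is not an eigen-action for the root decomposition, so the map is not a Lie group homomorphism, and bi-Lipschitz bounds must be obtained by explicit estimates in Iwasawa coordinates combined with the Mostow-type comparison between the symmetric-space metric and the left-invariant solvable metric.
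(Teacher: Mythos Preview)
Your approach has a genuine gap that makes both constructions fail, and the paper's route is quite different.

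The core problem is the ``irrational rotation'' idea. By the quasi-flat theorem of Kleiner--Leeb (used throughout the paper, e.g.\ \cite[Theorem 7.2.1]{KleinerLeeb97}), the image of any flat under a quasi-isometric embedding into a Euclidean building or higher-rank symmetric space lies in a bounded neighborhood of a finite union of flats; on asymptotic cones the induced bi-Lipschitz map has, at almost every point, a derivative which is a \emph{Weyl-pattern-preserving} linear map (this is exactly what Section~\ref{linear:section} and \cite{FisherWhyte18} establish). A rotation sending wall directions to non-wall directions is not pattern-preserving, so it cannot arise as the local model of a quasi-isometric embedding. In concrete terms for part~(1): your extension step requires, at each branching wall of $X$, an apartment $B\subset Y$ that agrees with $B_0$ on a half-plane bounded by the $\phi_0$-image of that wall. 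But in a Euclidean building two apartments intersect in a Weyl-convex set whose frontier lies in genuine walls; there is no apartment of $Y$ branching off $B_0$ along a non-singular line, so the inductive step cannot be carried out. For part~(2), even setting aside the bi-Lipschitz estimates, your map is linear on the $\RR_+^2$-factor and hence sends the base flat of $S$ onto the base flat of $S'$, so at least one flat \emph{does} land in a single flat, contradicting what you are trying to prove.

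The paper's construction is orthogonal to yours: it never tries to violate pattern-preservation. Instead it composes two $AN$-type embeddings, $T_3\times T_3 \xrightarrow{(\varphi,\varphi)} T_3\times T_3 \xrightarrow{g} Y$ (and analogously $H^2\times H^2\to H^3\times H^3\to \SL(3,\CC)/\SU(3)$), where $g$ is built from two singular boundary points $\xi_1,\xi_2$ and $(\varphi,\varphi)$ is chosen so that its boundary image avoids the vertical chamber $(\xi,\xi)$. The point is that under $g$ every Weyl chamber \emph{opposite} to $(\xi,\xi)$ spreads over \emph{two} Weyl chambers of $\partial Y$; since $(\varphi,\varphi)$ forces every chamber of the domain into that opposite region, the composite sends each flat to a set Hausdorff-close to a union of eight Weyl sectors, which cannot sit in a neighborhood of a single $A_2$-flat. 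So non-rigidity is obtained by multiplicity of sectors, not by rotating off the pattern.
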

Our construction does not give examples of non-rigid embeddings between irreducible spaces. Thus, it is natural to ask what an embedding between irreducible spaces looks like. The only embeddings known to the author, constructed in \cite{FisherWhyte18} and above examples, are compositions of $AN$-maps. We state the following questions, see also \cite[Section 5]{FisherWhyte18}.
\begin{question}
Does there exist a quasi-isometric embedding which is not a composition of $AN$-maps?
\end{question}
We give a brief history of the quasi-isometric rigidity of symmetric spaces and lattices. One of the first and motivating result is Mostow rigidity \cite{Mostow68}, which can be formulated in term of quasi-isometry as following: a quasi-isometry of real hyperbolic spaces of dimension at least 3 or of higher rank symmetric spaces that is equivariant under an isomorphism of uniform lattices is close to an isometry. Prasad extended the result to nonuniform lattice in \cite{Prasad73}. The rigidity of other rank one symmetric spaces was obtained by Pansu \cite{Pansu89}. For higher rank symmetric spaces and buildings, Kleiner-Leeb and Eskin-Farb showed that quasi-isometries are close to isometries or homotheties in \cite{KleinerLeeb97} and \cite{EskinFarb97}. Their quasi-flat result has been one of the main tools in study of quasi-isometry of groups acting geometrically on CAT(0) spaces. For non-uniform lattices, quasi-isometry is even more rigid: first was obtained in a striking result of Schwartz\cite{Schwartz95}  for rank one, and later by Schwartz \cite{Schwartz96}, Farb-Schwartz \cite{FarbSchwartz}, Eskin \cite{Eskin98}, Drutu \cite{Drutu00} for higher rank Lie groups. Quasi-isometric embeddings of higher rank irreducible symmetric spaces were first studied by Fisher and Whyte \cite{FisherWhyte18}. In \cite{FisherNguyen}, the author and David Fisher, we studied embeddings non-uniform lattices in simple Lie groups. 

The paper is organized as follows. In Section \ref{linear:section}, we study isomorphic linear maps between Euclidean vector spaces which preserve Weyl patterns. The key fact we obtain from this section is that such linear maps send irreducible factors into irreducible factors. In Section \ref{Decomposition-section}, we extend the claim that irreducible factors map into irreducible factors to hold true for bi-Lipschitz embeddings between $\RR$-branched Euclidean buildings.

We prove Theorem \ref{splitting-thm}, which is the key step to obtain further rigidity results. The argument is a combination of differentiation, results from Section \ref{linear:section}, and a combinatorial argument at non-differentibility points. The rigidity of quasi-isometric embedding rigidity between symmetric spaces and buildings, Theorem \ref{splitting-thm1} and Corollary \ref{splitting-cor} when there are additional assumption on Weyl patterns also follows. Next section, we study quasi-isometric embedding of non-uniform lattices. And in the last section, we include a discussion of some non-rigid embeddings, in particular, Theorem \ref{non-rigid-rank1-thm}.

{\bf Acknowledgment:} We would like to thank Professor David Fisher for introducing the questions and for helpful discussions. Part of the work was done during the author's visit at Weizmann Institute. We would like to thank Professor Uri Bader and Weizmann Institute for the hospitality. We also want to thank Professor Robert Young for useful discussions on non-rigid embedding examples and quasi-isometric embedding in general. Finally, we thank Sandeep Bhupatiraju for his comments and help in editing the drafts.

\label{SectionIntroduction}

\section{Weyl pattern preserving linear maps}\label{linear:section}
In this section, we study linear maps that preserve Weyl patterns \cite[Definition 4.1]{FisherWhyte18}. These linear maps will be used for studying derivatives of bi-Lipschitz maps restricted to flats in asymptotic cones.
The main result in this section is the following theorem.

\begin{theorem}\label{linearmap}
Any linear map that preserves Weyl patterns maps irreducible factors into irreducible factors.
\end{theorem}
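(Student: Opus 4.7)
The plan is to reduce the theorem to a planar statement via the connectedness of irreducible root systems. Recall that the root system $\Phi_i$ associated to an irreducible factor $V_i$ has the structural property that any two roots are linked by a chain of roots with consecutive pairs non-orthogonal; equivalently, the ``non-orthogonality graph'' on $\Phi_i$ is connected. Any two non-orthogonal roots $\alpha,\beta\in\Phi_i$ span a two-plane $P_{\alpha\beta}\subset V_i$ whose induced Weyl pattern is an irreducible rank-two pattern of type $A_2$, $B_2$, or $G_2$, featuring at least three distinct hyperplane lines in a rigid angular configuration. This reduces the problem to understanding, at the level of two-planes, what it means for a Weyl-pattern-preserving linear map to carry such a pattern into $V'$.

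I would then establish a planar lemma: a two-plane $Q\subset V'=V_1'\oplus\cdots\oplus V_m'$ whose inherited Weyl pattern is irreducible rank-two must lie in a single irreducible factor $V_j'$. The key observation is that every root hyperplane of $V'$ supported on factor $V_j'$ contains the complementary summand $\bigoplus_{l\ne j}V_l'$, so it cuts $Q$ either trivially or along the single line $Q\cap\bigoplus_{l\ne j}V_l'$. If $Q$ projected nontrivially onto several factors, the non-trivial hyperplane traces on $Q$ would be confined to at most one direction per such factor, with those directions determined purely by the geometry of $Q$ rather than by any root-system angle data. The type-preservation built into the Weyl-pattern-preservation hypothesis then rules out incidental geometric coincidences (such as the $A_2$-like configuration cut out by the three coordinate hyperplanes on the plane $x+y+z=0$ in $\RR^3$), because a genuine $A_2$, $B_2$, or $G_2$ pattern requires hyperplanes of matching root type linked by non-orthogonal root pairs, whereas roots from distinct irreducible factors of $V'$ are mutually orthogonal and carry distinct types.

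Applying the planar lemma to $L(P_{\alpha\beta})$ for each non-orthogonal pair $\alpha,\beta\in\Phi_i$ shows that each such image plane lies in one irreducible factor $V_{j(\alpha,\beta)}'$ of $V'$. Along any non-orthogonality chain $\gamma_0,\gamma_1,\dots,\gamma_r$ in $\Phi_i$, consecutive image planes $L(P_{\gamma_s,\gamma_{s+1}})$ and $L(P_{\gamma_{s+1},\gamma_{s+2}})$ share the direction $L(\gamma_{s+1}^{\vee})$, which forces $j(\gamma_s,\gamma_{s+1})=j(\gamma_{s+1},\gamma_{s+2})$. Connectedness of the non-orthogonality graph then propagates $j$ to a single index for all pairs, and since the root directions of $\Phi_i$ span $V_i$, we conclude $L(V_i)\subset V_j'$.

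The main obstacle I foresee is formalizing the planar lemma: specifically, extracting from the precise Weyl-pattern-preservation definition the type information needed to distinguish honest irreducible rank-two patterns from accidental geometric coincidences that arise when a plane sits transversely to a product decomposition. This is where the rigid incidence combinatorics of $A_2$, $B_2$, and $G_2$, together with the product structure of $V'$ and the orthogonality between distinct factors, must be combined in a careful case analysis by the number and type of factors that $Q$ meets non-trivially.
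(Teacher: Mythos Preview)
Your overall strategy---reduce to a rank-two statement via the connectedness of an irreducible root system---is sound and in fact leads to a shorter proof than the paper's type-by-type analysis. However, the implementation has a genuine gap: the plane $P_{\alpha\beta}=\operatorname{span}(\alpha,\beta)$ is \emph{not} a singular subspace in general, and this breaks several steps at once.

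Concretely, in $A_3$ realised in $\{x_1+\cdots+x_4=0\}$ with $\alpha=e_1-e_2$, $\beta=e_2-e_3$, one has $P_{\alpha\beta}=\{x_4=0\}\cap V$, which is not cut out by any root hyperplanes $x_i=x_j$. Two consequences follow. First, the restricted pattern on $P_{\alpha\beta}$ is not $A_2$: all six root hyperplanes of $A_3$ meet $P_{\alpha\beta}$ in six distinct lines, so your premise ``induced Weyl pattern of type $A_2$, $B_2$, or $G_2$'' is false already in the domain. Second, since Weyl-pattern preservation means $L$ carries \emph{singular} subspaces to singular subspaces, it says nothing about $L(P_{\alpha\beta})$; in particular there is no reason the restricted pattern on $L(P_{\alpha\beta})$ should reflect anything about $P_{\alpha\beta}$. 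Your planar-lemma sketch has a separate flaw: the claim that a factor-$j$ hyperplane meets $Q$ in the fixed line $Q\cap\bigoplus_{l\ne j}V_l'$ holds only when $\operatorname{proj}_j|_Q$ has rank $\le 1$; when it has rank $2$ (which is perfectly possible for a $2$-plane not contained in $V_j'$), different factor-$j$ hyperplanes cut $Q$ in different lines, and the counting argument collapses.

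The repair is to dualise. Replace $P_{\alpha\beta}$ by the codimension-two \emph{singular} subspace $W_{\alpha\beta}=\ker\alpha\cap\ker\beta$. For non-orthogonal $\alpha,\beta$ there are at least three Weyl hyperplanes of $V$ containing $W_{\alpha\beta}$ (namely $\ker\gamma$ for $\gamma$ in the rank-two subsystem $\Phi\cap\operatorname{span}(\alpha,\beta)$), and $L$ carries them to at least three hyperplanes of $V'$ through the codimension-two singular subspace $L(W_{\alpha\beta})$. In a product $V'=\bigoplus_k V_k'$ every singular subspace splits as $\bigoplus_k W_k$ with $W_k$ singular in $V_k'$; a codimension-two one is either concentrated in a single factor or of the form $H_j'\cap H_k'$ with $j\ne k$, and the latter is contained in exactly two hyperplanes. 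Hence $L(\ker\alpha)$ and $L(\ker\beta)$ come from the same factor, and your connectedness argument now propagates correctly on the \emph{hyperplane} side. The final step, that the irreducible factor $V_i$ itself lands in one $V_j'$, then follows (after the paper's reduction to $V_1$ irreducible) from $\bigcap_{\gamma\in\Phi_1}L(\ker\gamma)=L(0)=0$, which forces $\bigoplus_{k\ne j}V_k'=0$.

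Compared with the paper, which builds a successive chain of singular subspaces and runs a case analysis by root-system type (Lemmas on $A_n$, $D_n$ and their corollary for $E$, $F$, $BC$), the dualised version of your idea is more uniform: it uses only that an irreducible rank-two subsystem has at least three hyperplanes, together with the product structure of singular subspaces in $V'$. What the paper's approach buys is that it stays entirely within the language of restricted patterns on singular subspaces, avoiding any passage through roots or coroots; what your (corrected) approach buys is the elimination of the type-by-type casework.
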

The idea is as follows. If the linear map does not send an irreducible subflat into an irreducible factor then its image splits into a product. On the other hand, an irreducible pattern, intuitively, is always more complicated than a reducible pattern, i.e., a product pattern of the same rank. This makes such linear embeddings impossible. To make this argument rigorous, we argue by induction on the dimension of the domain. We first introduce some notions and lemmas. 
\begin{definition}[Restricted pattern]
Let $V$ be a vector space with a Weyl pattern, and let $W$ be a singular subspace of $V$. The restricted pattern for $W$ is the pattern in which singular subspaces are intersections of $W$ with singular subspaces of $V$.
\end{definition}

\begin{remark}
A restricted pattern may not be linearly equivalent to any Weyl pattern of the same dimension. For example, let $V=\RR^4$ with a Weyl pattern of type $D_4$. We assume that the Weyl hyperplanes are $\{x_i=\pm x_j: 1\le i<j\le 4\}$. The restricted pattern to the hyperplane $x_1=x_2$ has 7 restricted hyperplanes which are $x_1=x_2=0, x_1=x_2=\pm x_3, x_1=x_2=\pm x_4$, and $x_1=x_2 \land x_3=\pm x_4$. Therefore this restricted pattern is not linearly equivalent to any Weyl pattern of dimension 3.
\end{remark}

Before proving Theorem \ref{linearmap}, we investigate possible restricted patterns. We study this for type $A$ patterns and type $D$ patterns. General patterns will follow from behavior of restricted patterns of type $D$. Firstly, for type $A$ patterns, we have the following lemma.

 \begin{lemma}\label{lem:Atype}
Any singular subspace of an $A_n$-pattern vector space with its restricted Weyl pattern can be linearly identified with a type $A$ Weyl pattern vector space with the same dimension as that of the subspace.
 \end{lemma}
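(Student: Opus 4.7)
My plan is to model singular subspaces combinatorially by partitions, then produce the required linear identification via projection along the diagonal in an auxiliary copy of $\RR^k$.

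First I would fix the standard presentation $V = \{x \in \RR^{n+1} : \sum_i x_i = 0\}$ of the $A_n$-pattern vector space, with Weyl hyperplanes $H_{ij} = \{x_i = x_j\} \cap V$ for $0 \le i < j \le n$. Any singular subspace $S \subset V$ is an intersection of some of the $H_{ij}$, and the equivalence relation on $\{0,1,\ldots,n\}$ generated by the corresponding pairs $(i,j)$ partitions the index set into blocks $B_1,\ldots,B_k$. Parameterizing $S$ by the common value $y_\ell$ of the coordinates $x_i$ for $i\in B_\ell$, one sees that $S = \{y \in \RR^k : \sum_\ell |B_\ell|\, y_\ell = 0\}$, so $\dim S = k-1$. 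For any two distinct blocks $B_\ell, B_{\ell'}$, the hyperplane $H_{ij}$ with $i\in B_\ell$, $j\in B_{\ell'}$ meets $S$ in $\{y_\ell = y_{\ell'}\}\cap S$, and these are exactly the codimension-one subspaces of the restricted Weyl pattern on $S$, giving $\binom{k}{2}$ such hyperplanes.

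The key observation is that in the ambient $\RR^k$ each of the subspaces $\{y_\ell = y_{\ell'}\}$ contains the diagonal line $L = \RR \cdot (1,1,\ldots,1)$. Both $S$ and the standard $A_{k-1}$ model $V_{k-1} = \{\sum_\ell y_\ell = 0\}\subset\RR^k$ are hyperplanes transverse to $L$, since their normal vectors $(|B_1|,\ldots,|B_k|)$ and $(1,\ldots,1)$ both pair nontrivially with $(1,\ldots,1)$. Consequently the quotient map $\pi: \RR^k \to \RR^k/L$ restricts to linear isomorphisms on $S$ and on $V_{k-1}$, and I would define the desired linear isomorphism as $\phi := (\pi|_{V_{k-1}})^{-1}\circ \pi|_S : S \to V_{k-1}$.

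To finish, I would note that since each hyperplane $\{y_\ell = y_{\ell'}\}$ contains $L$, its intersection with $S$ is sent by $\phi$ precisely onto its intersection with $V_{k-1}$. Thus $\phi$ carries the restricted Weyl pattern on $S$ bijectively onto the $A_{k-1}$ Weyl pattern on $V_{k-1}$. The whole argument is essentially bookkeeping; the only nontrivial ingredient is recognizing that the $A$-type pattern is invariant under projection along the diagonal, which is what allows restricted $A$-patterns to remain $A$-patterns of smaller rank (in contrast to the $D_4$ example in the preceding remark).
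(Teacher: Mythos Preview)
Your argument is correct and handles the lemma in one stroke, whereas the paper proceeds inductively: it reduces to the codimension-one case and then writes down an explicit linear map $f: U \to W_{12}$, in coordinates $f(y_1,\ldots,y_n) = (y_1, y_1, \tfrac{n+1}{n}y_2 - \tfrac{1}{n}y_1, \ldots, \tfrac{n+1}{n}y_n - \tfrac{1}{n}y_1)$, identifying the restricted pattern on the hyperplane $\{x_1=x_2\}$ with the standard $A_{n-1}$ pattern, then iterates. Your partition parameterization and projection along the diagonal $L$ give the conclusion for an arbitrary singular subspace directly, with no induction, and make transparent \emph{why} the $A$-type is preserved: every hyperplane $\{y_\ell = y_{\ell'}\}$ in the ambient $\RR^k$ contains $L$, so the entire arrangement descends to $\RR^k/L$ and is independent of which transversal hyperplane ($S$ or $V_{k-1}$) one uses to represent it. The paper's explicit formula is easy to check by hand; your version is more conceptual and isolates the structural feature (diagonal invariance of the $A$-arrangement) that distinguishes type $A$ from the $D_4$ example in the preceding remark. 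One small point worth making explicit in your write-up: every singular subspace of $V$ is an intersection of the $H_{ij}$, so any restricted singular subspace of $S$ is an intersection of the sets $\{y_\ell = y_{\ell'}\}\cap S$; this confirms that your list of $\binom{k}{2}$ restricted hyperplanes is complete.
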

 
 \begin{proof}
 It suffices to prove the lemma for co-dimension one singular subspaces. Consider a presentation of the $A_n$ vector space given by
 \[W=\{(x_1,\dots, x_{n+1})\in \RR^{n+1}: x_1+\dots+x_{n+1}=0\},\]
and Weyl hyperplanes
\[W_{ij}=\{(x_1,\dots, x_{n+1})\in V: x_i=x_j\},\] 
for $1\le i< j\le n+1$. It suffices to show that $W_{12}$ with restricted Weyl pattern can be linearly identified with a vector space of type $A_{n-1}$. Indeed, Weyl hyperplanes intersect $W_{12}$ and form the following restricted hyperplanes in $W_{12}$:
\[W_{ij}'=\{(x_1,\dots, x_{n+1})\in W_{12}: x_1=x_2, x_i=x_j\},\]
for $3\le i< j \le n+1$, and
\[W_{12k}=\{(x_1,\dots, x_{n+1})\in W_{12}: x_1=x_2=x_k\},\]
for $3\le k \le n+1$.

Let $U=\{(y_1,\dots, y_n): y_1+\dots+y_n=0\}$ be a vector space of type $A_{n-1}$, in which Weyl hyperplanes are $U_{ij}=\{y_i=y_j\}\subset U$, for $1\le i< j\le n$. Then the linear map $f:U\to W_{12}$, defined by $f(y_1,\dots, y_n)=(y_1,y_1, \frac{n+1}{n} y_2 -\frac 1 n y_1, \dots, \frac{n+1}{n} y_n -\frac 1 n y_1)$ maps Weyl hyperplanes $U_{ij}$ to $W_{(i+1)(j+1)}'$, for $1<i<j<n$, and $U_{1k}$ to $W_{12(k+1)}$, for $1<k\le n$. This linear map identifies $W_{12}$ along with the restricted Weyl pattern with a vector space of type $A_n$. Thus the lemma follows.
 \end{proof}

We now study restricted pattern in type $D$ vector spaces. A type $D$ pattern is the simplest pattern among patterns that are not of type $A$. It is simplest in the sense that other patterns contain type $D$ as a sub-pattern. The result for other types will follow as a corollary of the following lemma.

\begin{lemma}\label{typeD-lemma}
Let $V$ be a $D_n$ vector space, $n\ge 3$. There is a chain of singular subspaces $V_2\subset V_3 \subset \dots \subset V_{n-1} \subset V$ such that $\dim (V_i)=i$ and $V_i$ with restricted pattern contains a linearly embedded type $D_i$ pattern. Moreover, the number of restricted hyperplanes in $V_i$ is strictly greater than the number of hyperplanes in a type $D_i$ vector space, for $2\le i\le n-1$.
 \end{lemma}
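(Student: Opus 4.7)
The plan is to exhibit the chain explicitly by coalescing the last coordinates one at a time, then read off the restricted pattern by a short case split on hyperplane indices.

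Normalize $V = \RR^n$ with the standard $D_n$ hyperplanes $H_{jk}^{\pm} = \{x_j = \pm x_k\}$ for $1 \leq j < k \leq n$, of which there are $n(n-1)$ in total. For $2 \leq i \leq n-1$, I would set
\[ V_i := \{x \in V : x_i = x_{i+1} = \dots = x_n\}, \]
so that $V_i$ is the intersection of the singular hyperplanes $H_{j,j+1}^{+}$ for $j = i, \dots, n-1$, has dimension $i$, and $V_2 \subset V_3 \subset \cdots \subset V_{n-1}$ by construction. I would then identify $V_i$ with $\RR^i$ via the linear isomorphism $(y_1, \dots, y_i) \mapsto (y_1, \dots, y_i, y_i, \dots, y_i)$.

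Under this identification I would compute $H_{jk}^{\pm} \cap V_i$ according to where the indices lie. For $1 \leq j < k \leq i$, the intersection is $\{y_j = \pm y_k\}$, so all $i(i-1)$ hyperplanes of the standard $D_i$ pattern appear as restricted hyperplanes. For $j < i \leq k$, the intersection is $\{y_j = \pm y_i\}$, which is already one of the $D_i$ hyperplanes just listed. For $i \leq j < k$, the hyperplane $H_{jk}^{+}$ restricts to all of $V_i$ and so contributes nothing, while $H_{jk}^{-}$ restricts to $\{y_i = -y_i\} = \{y_i = 0\}$. Thus the restricted pattern on $V_i$ is precisely the $D_i$ pattern augmented by the single extra hyperplane $\{y_i = 0\}$, giving $i(i-1) + 1$ restricted hyperplanes and containing a linearly embedded $D_i$ pattern, as required.

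There is no serious obstacle; the verification is essentially bookkeeping once the right chain is identified. The two points to be careful about are that the extra hyperplane $\{y_i = 0\}$ is genuinely new (the $D_i$ pattern contains no coordinate hyperplane, so the strict inequality is real), and that the base case $i=2$ behaves as claimed (there the restricted pattern has $3$ hyperplanes while the $D_2$ pattern has $2$, and the chain degenerates to the single subspace $V_2$ when $n=3$).
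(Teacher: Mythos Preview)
Your proof is correct and follows essentially the same approach as the paper: both build the chain by successively coalescing coordinates via the hyperplanes $H_{jk}^+$, with the paper merging $x_1=x_2$ and arguing inductively while you merge $x_i=\cdots=x_n$ and compute the full restricted pattern in one shot. Your version is slightly more explicit in that it exhibits the entire chain and identifies the single extra hyperplane $\{y_i=0\}$ at every level directly, whereas the paper leaves the iteration implicit after the codimension-one step.
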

 \begin{proof}
Assume that $V$ is equipped with the canonical type $D_n$ Weyl hyperplanes $V_{ij}^+=\{(x_1,\dots,x_n): x_i=x_j\}$, and $V_{ij}^-=\{(x_1,\dots,x_j): x_i=-x_j\}$, for $1\le i<j\le n$.

It suffices to show that there is a singular hyperplane with a restricted pattern containing a $D_{n-1}$ pattern. We show that $V_{12}^+$ satisfies this claim. Indeed, restricted hyperplanes of $V_{12}^+$ are $V_{12}= \{(0,0,x_3,\dots,x_n)\}$ and $V_{ij}^\pm =\{(x_2,x_2, x_3,\dots, x_n):\\ x_i=\pm x_j\}$, for $2\le i<j\le n$. The linear map $f:\RR^{n-1}\to W_{12}^+$, defined by $f(y_1,\dots,y_n)=(y_1, y_1, y_2,\dots, y_n)$, maps $\RR^{n-1}$ with the canonical $D_{n-1}$ pattern into $V_{12}^+$ where image of pattern consists of all $V_{ij}^\pm$, for $2\le i<j\le n$. We note that the restricted hyperplane $V_{12}$ is not in the collection of images of the hyperplanes of $\RR^{n-1}$ by $f$.
 \end{proof}
We make the following definition.
\begin{definition}
Let $V$ be a vector space of dimension $n$. A chain of vector subspaces $V_2\subset V_3 \subset \dots \subset V_{n-1} \subset V_n$ is called successive if $dim(V_i)=i$, for $2\le i\le n$.
\end{definition}
We have the following corollary from Lemma \ref{typeD-lemma}.
\begin{corollary}\label{othertypes-cor}
Let $V$ be a vector space of one of the following types: $E_6, E_7, E_8, F_4$, or $BC_n, D_n$ for $n\ge 3$. Then $V$ contains a successive chain of singular subflats such that each subflat with restricted pattern contains a linearly embedded type $D$ pattern of the same dimension. Moreover, the number of of restricted hyperplanes in each singular subflat is greater than the number of hyperplanes in a type $D$ vector space of the same dimension.
\end{corollary}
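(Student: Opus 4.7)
The plan is to reduce Corollary \ref{othertypes-cor} to Lemma \ref{typeD-lemma} by identifying, for each listed type, an appropriate $D$-type Weyl sub-pattern inside (a singular subflat of) $V$, and then invoking Lemma \ref{typeD-lemma} within that sub-pattern. The case where $V$ is of type $D_n$ is Lemma \ref{typeD-lemma} itself. In the other cases, each Weyl hyperplane of the ambient pattern that is not present in the chosen $D$-sub-pattern will cut each singular subflat of the resulting chain in an additional restricted hyperplane, reinforcing the strict inequality in the hyperplane count already guaranteed by Lemma \ref{typeD-lemma}.

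For $BC_n$, $F_4$, and $E_8$, a Weyl sub-pattern of type $D_n$ at the same rank as $V$ is readily visible: the roots $\pm e_i \pm e_j$ inside $BC_n$, the long roots of $F_4$ forming $D_4$, and a fixed $D_8$ sub-system of $E_8$. Applying Lemma \ref{typeD-lemma} to this $D_n$ sub-pattern produces a chain $V_2 \subset \dots \subset V_{n-1} \subset V$ in which each $V_i$ is singular also in the ambient pattern and has a restricted pattern containing a linearly embedded $D_i$; the short roots of $BC_n$ and $F_4$ or the extra roots of $E_8 \setminus D_8$ then supply the additional restricted hyperplanes that push the count strictly above the $D_i$ total.

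For $E_7$, the sub-root-system of roots orthogonal to a fixed root $\alpha$ is of type $D_6$ inside the $6$-dimensional Weyl hyperplane $V_6 = \alpha^\perp$. Taking $V_6$ as the top of the chain and applying the previous paragraph to $D_6 \subset V_6$ builds $V_2 \subset \dots \subset V_5 \subset V_6$; the $E_7$-roots non-orthogonal to $\alpha$ contribute the remaining restricted hyperplanes needed to exceed the $D_i$ counts.

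The main obstacle is $E_6$, for which $E_6$ contains no same-rank $D_6$ Weyl sub-pattern and the orthogonal sub-root-system of any single root is $A_5$ rather than $D_5$. The plan is to show that a Weyl hyperplane $V_5 = \alpha^\perp$ of $E_6$ nevertheless has a restricted pattern containing a linearly embedded $D_5$. Of the $35$ restricted hyperplanes of $V_5$ ($15$ from the orthogonal $A_5$ sub-system and $20$ from $\alpha$-root-strings $\{\beta, \beta-\alpha\}$ with $(\beta, \alpha) \ne 0$), I would pick an explicit $20$-element subset realizing the $D_5$ hyperplanes $\{y_i = \pm y_j\}$ under a suitable linear identification $\RR^5 \to V_5$, with the root-string hyperplanes supplying the sign-reversal components missing from the $A_5$ sub-pattern. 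Once this identification is in hand, the remainder $V_2 \subset \dots \subset V_4 \subset V_5$ is built inside $V_5$ as in the cases above, and the strict count is already forced at $V_5$ by $35 > 20$.
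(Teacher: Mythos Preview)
Your strategy is the same as the paper's: locate an embedded $D$-type pattern and then iterate Lemma~\ref{typeD-lemma}. The difference is that the paper dispatches all cases in one line by asserting that each listed $V$ of rank $n$ already contains a linearly embedded $D_n$ pattern, whereas you single out $E_6$ and $E_7$ for separate treatment because neither contains $D_n$ as a sub-root system at full rank. Your fix---pass first to a Weyl hyperplane $\alpha^\perp$ and locate $D_{n-1}$ there---is correct. For $E_7$ the sub-root system orthogonal to $\alpha$ is indeed $D_6$. For $E_6$ your plan also goes through: writing $\alpha^\perp \cong \{x\in\RR^6 : \sum x_i = 0\}$ so that the $35$ restricted hyperplanes are $\{x_i = x_j\}$ and $\{x_i + x_j + x_k = 0\}$, the linearly independent functionals $g_i = x_i + \tfrac12 x_6$ for $1 \le i \le 5$ give $g_i - g_j = x_i - x_j$ and $g_i + g_j = x_i + x_j + x_6$, so the twenty hyperplanes $\{g_i = \pm g_j\}$ sit among the thirty-five and realize $D_5$. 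From there Lemma~\ref{typeD-lemma} iterates exactly as in the other cases, and the extra ambient hyperplanes guarantee the strict counts. Your proposal is thus sound and, if anything, supplies the justification the paper's two-line proof omits for $E_6$ and $E_7$; the chain $V_2 \subset \cdots \subset V_{n-1}$ of proper singular subflats you produce is precisely what the proof of Theorem~\ref{linearmap} actually uses.
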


\begin{proof}
If $V$ is of one of the types listed, then $V$ contains a linearly embedded type $D_n$ pattern. The corollary now follows by applying Lemma \ref{typeD-lemma} repeatedly.
\end{proof} 

We conclude this section with the proof of Theorem \ref{linearmap}
 
\begin{proof}[Proof of theorem \ref{linearmap}]
Let $T:V_1\to V_2$ be a linear map that preserves Weyl patterns. It suffices to prove the theorem in the case where the pattern of $V_1$ is irreducible, i.e., we show that $T$ maps $V_1$ into an irreducible Weyl factor of $V_2$. We have that singular subspaces map to singular subspaces. Singular subspaces in the target are products of singular subspaces of irreducible factors. The claim is trivial if the rank of $V_1$ is 1. Thus, we can assume that rank of $V_1$ is at least 2.

First, we note that the claim is true when $V_1$ is of type $G_2$. Indeed, the only reducible rank 2 vector space is of type $A_1\times A_1$, which has two hyperplanes, while a type $G_2$ vector space has 6 hyperplanes. Thus, the image of a type $G_2$ plane cannot be reducible.

If $T$ does not send $V_1$ into an irreducible factor, then there exist singular subspaces $W_1\subset W_2\subset V_1$, such that $\dim(W_1)=\dim(W_2)-1$ and $T$ maps $W_1$ into an irreducible factor but does not map $W_2$ into an irreducible factor. We then have $T(W_2)\cong T(W_1)\times \RR$, and the restricted hyperplanes of $T(W_2)$ are exactly either $T(W_1)$ or products of restricted hyperplanes of $T(W_1)$ and the $\RR$ factor. In particular, the intersection of all hyperplanes but one in $T(W_2)$ is a 1-dimensional singular subspace. We have following cases based on the type of vector space $V_1$.

{\it Case 1: when $V_1$ is of type $A_n$.} Then, by Lemma \ref{lem:Atype}, the restricted patterns of $W_1$ and $W_2$ are (linearly identified) of type $A$. However, for type $A$, the intersection of all hyperplanes but one is always trivial, i.e., $\{0\}$. This contradicts the remark above.

{\it Case 2: when $V_1$ is of type $BC_n, D_n, E_6, E_7, E_8$, or $F_4$.} Suppose $dim(W_1)\ge 2$. In this case, we can choose $W_1$ and $W_2$ such that they are first two spaces of a successive chain of singular subspaces, generated by Corollary \ref{othertypes-cor}. Restricted patterns of $W_1$ and  $W_2$ contain embedded type $D$ patterns. In type $D$ vector spaces of dimension at least 3, the intersection of all hyperplanes but one is trivial. Hence, we get the same contradiction as in case 1. Therefore, we are left with the only remaining possibility: $\dim(W_1)=1$ and $\dim(W_2)=2$. In this case, $T(W_2)$ has two restricted hyperplanes. On the other hand, $W_2$ can be chosen to be the dimension 2 subspace in the successive chain obtained by Corollary \ref{othertypes-cor}. It follows that $W_2$ has at least 3 restricted hyperplanes. Thus, $T$ maps $W_2$, with at least 3 restricted hyperplanes, into $T(W_2)$ having only 2 restricted hyperplanes. This contradicts the assumption that $T$ preserves patterns.

Therefore, any pattern-preserving linear map maps each irreducible factor into an irreducible factor.
\end{proof}

\section{Decomposition of quasi-isometric embeddings}\label{Decomposition-section}

Since $D_2=A_1\times A_1$ and $D_3=A_3$, we consider them as type $A$ patterns to avoid confusion. From now on, we have the type $D_n$ only for $n\ge 4$. 

In this section, except in the proof of Theorem \ref{splitting-thm1} and Corollary \ref{splitting-cor} at the end of the section, we assume that $X=X_1\times \dots \times X_n$ and $Y=Y_1\times \dots\times Y_m$ are $\RR$-branched Euclidean buildings without Euclidean factors, where $X_i$ and $Y_j$ are irreducible factors, for $1\le i\le n$ and $1\le j\le m$. Let $f: X\to Y$ be a bi-Lipschitz embedding.
\begin{definition}[Irreducible Subflat] 
An irreducible subflat is a subflat that satisfies the following conditions
\begin{enumerate}[label=(\roman*)]
\item the subflat is contained entirely in an irreducible factor, i.e. the projection to the product of other factors is a point, and 
\item the subflat has the same rank as the rank of the irreducible factor.
\end{enumerate}
\end{definition}

By \cite[Theorem 7.2.1, Corollary 7.2.4]{KleinerLeeb97}, every bi-Lipschitz flat is contained in a finite union of flats. Moreover, we deduce the following fact.

\begin{lemma}
Each bi-Lipschitz flat is a finite union of convex polyhedra.
\end{lemma}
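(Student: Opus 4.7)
The plan is to combine the Kleiner--Leeb containment $F'\subset F_1\cup\dots\cup F_k$ with the bi-Lipschitz $n$-dimensional structure of $F'=f(F)$ and decompose $F'$ cell by cell. The first step is structural: for any two distinct flats $F_i,F_j$ of $Y$, the intersection $P_{ij}=F_i\cap F_j$ is a closed convex subset whose boundary inside $F_i$ is a finite union of walls of $F_i$, so $P_{ij}$ is a convex polyhedron inside $F_i\cong\RR^n$. The collection of walls of $F_i$ that bound some $P_{ij}$ with $j\neq i$ gives a finite wall arrangement $\mathcal{A}_i$ that subdivides $F_i$ into finitely many open convex polyhedral cells, and each $P_{ij}$ is itself a union of closed cells of $\mathcal{A}_i$.

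The heart of the argument is the following clopen dichotomy: for every open cell $C$ of $\mathcal{A}_i$, either $C\cap F'=\emptyset$ or $C\subset F'$. To prove it I would fix $p\in C\cap F'$ and observe that the flats $F_\ell$ passing through $p$ are exactly those $\ell$ with $C\subset F_\ell$; every remaining $F_m$ is bounded away from $p$. Each such $F_\ell$ contains an open neighborhood of $p$ in $F_i$, so by the apartment axioms it coincides with $F_i$ in some small metric ball $B$ around $p$, and therefore $F'\cap B\subset F_i$. Since $f\colon F\to F'$ is bi-Lipschitz with $F\cong\RR^n$, the image $F'$ is an $n$-dimensional topological manifold; applying invariance of domain inside $F_i\cong\RR^n$ then shows that $F'\cap F_i$ contains an open neighborhood of $p$ in $F_i$. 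Hence $C\cap F'$ is both open and closed in the connected set $C$, forcing $C\subset F'$.

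To finish, every point $q\in F'$ lies in the closure of some cell included in $F'$: locally around $q$, the $n$-manifold $F'$ must have $n$-dimensional intersection with some $F_\ell$ (by a dimension count, as $F'$ lies in the union of finitely many flats), and by the dichotomy these $n$-dimensional pieces are open cells of $\mathcal{A}_\ell$ contained in $F'$. Consequently
\[
F' \;=\; \bigcup_{\ell=1}^{k}\,\bigcup_{\substack{C\in\mathcal{A}_\ell\\ C\subset F'}} \barr{C},
\]
which is a finite union of convex polyhedra.

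I expect the dichotomy step to be the main obstacle, because in an $\RR$-branched Euclidean building several flats may overlap along full-dimensional convex regions, and one must verify that the bi-Lipschitz image $F'$ really behaves locally as an $n$-dimensional patch of a single flat near a generic point. This is arranged by using $\mathcal{A}_i$ to isolate the flats that genuinely meet a neighborhood of $p$ and then invoking invariance of domain for the bi-Lipschitz map $f$.
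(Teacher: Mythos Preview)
Your proof is correct and follows the same scheme as the paper's: pass to the finite union of flats from \cite{KleinerLeeb97}, chop it into finitely many convex polyhedral pieces, and run a clopen argument showing each open piece is either contained in the bi-Lipschitz flat $F'$ or disjoint from it. The one substantive difference is in the openness half of the dichotomy. The paper invokes \cite[Corollary~6.2.3]{KleinerLeeb97}, which says that locally $F'$ is a cone over a sphere and hence a finite union of Weyl sectors; since the cone point is interior to the ambient polyhedron, these sectors lie inside it, giving a neighborhood in $F'$. You instead use the wall arrangement $\mathcal{A}_i$ to guarantee that near an interior point $p$ every flat $F_\ell$ through $p$ agrees with $F_i$, so that $F'\cap B\subset F_i$, and then apply invariance of domain to the bi-Lipschitz image inside $F_i\cong\RR^n$. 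This is a nice alternative: it trades the local sector description (which carries more structural information but requires the cited result) for a purely topological fact, at the modest cost of the explicit bookkeeping with the arrangements $\mathcal{A}_i$ and the final dimension-count to cover boundary points.
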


We note that there is a coarse version of this lemma in \cite[Lemma A.3]{Eskin98}. The idea here is the same, but the argument for bi-Lipschitz flats in asymptotic cones is much simpler, and is based on \cite{KleinerLeeb97}. We give a proof of this lemma for completeness.

\begin{proof}
Every bi-Lipschitz flat is contained in a finite union of flats. The union of flats can be written as a finite union of closed convex polyhedra. Let $\mathcal F$ be this finite family of polyhedra in the union.
Let $P$ be a polyhedron in $\mathcal F$ such that there is an interior point of $P$ belonging to a bi-Lipschitz flat $U$. Then we claim that the whole polyhedron $P$ is contained in $F$. To show this claim we show $P\cap U$ is open and closed in $P$.\\
Since $P$ and $U$ are both closed, we have $P\cap U$ is a closed set. For openness, let $x$ be interior point of $P$ such that $x\in U$. Locally at $x$, the bi-Lipschitz flat is a cone over a sphere, and thus is a finite union of sectors \cite[Corollary 6.2.3]{KleinerLeeb97}. Since $x$ is an interior point of the polyhedron, those sectors are contained in the polyhedron. In particular, there is neighborhood of $x$ contained in $U$. It follows that every interior point of $P$ is contained in $U$. Since $P\cap U$ is closed, we have $P\subset U$.\\
Therefore, every polyhedron in $\mathcal F$ either is contained in $U$ or has interiors disjoint from $U$. By discarding poyhedra having interior disjoint from $U$, we can write $U$ as a finite union of convex polyhedra.
\end{proof}

\begin{corollary}
The image of a $k$-singular subflat under a bi-Lipschitz embedding of a Euclidean building is contained in a finite union of $k$-singular subflats.
\end{corollary}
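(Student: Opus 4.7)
Plan: Let $S$ be the given $k$-singular subflat of $X$. I would begin by extending $S$ to a maximal flat $F \supset S$ and applying the previous lemma to $F$. This gives a decomposition $f(F)=P_1\cup\dots\cup P_r$ where each $P_i$ is a convex polyhedron contained in an apartment $A_i$ of $Y$, and correspondingly $F=Q_1\cup\dots\cup Q_r$ where $Q_i=f^{-1}(P_i)\cap F$. Since there are only finitely many $i$, it suffices to show that each piece $f(S\cap Q_i)$ is contained in a $k$-singular subflat of $A_i$.

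Fix $i$ and view $f|_{Q_i}\colon Q_i\to A_i$ as a bi-Lipschitz map between convex subsets of $\RR^n$, where $n=\Rrank(X)=\Rrank(Y)$. By Rademacher's theorem, $f|_{Q_i}$ is differentiable almost everywhere, and at any such point $x$ in the interior of $Q_i$ the derivative $Df_x\colon T_xF\to T_{f(x)}A_i$ is a linear isomorphism. The next step is to argue that $Df_x$ is Weyl pattern preserving. The input is that every singular hyperplane of $F$ through $x$ is itself contained in some auxiliary maximal flat of $X$, and the previous lemma applied to that flat forces its image under $f$ to lie in a finite union of convex polyhedra whose faces lie along singular hyperplanes of $Y$; differentiating at $x$ then sends each singular hyperplane direction in $T_xF$ to a singular hyperplane direction in $T_{f(x)}A_i$, and consequently sends every singular $k$-subspace to a $k$-singular subspace. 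In particular, $Df_x(T_xS)$ is a $k$-singular subspace of $T_{f(x)}A_i$.

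To conclude, note that an apartment admits only finitely many $k$-singular directions, so the assignment $x\mapsto Df_x(T_xS)$ is a measurable map from the interior of $Q_i$ into a finite discrete set, hence is almost everywhere constant on each connected component. Together with bi-Lipschitz continuity of $f|_{S\cap Q_i}$, this forces $f(S\cap Q_i)$ to lie along a single affine $k$-plane with singular direction, i.e., in a $k$-singular subflat of $A_i$. Taking the union over $i=1,\dots,r$ completes the proof.

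The principal obstacle I anticipate lies in rigorously establishing the Weyl-pattern-preserving property of $Df_x$: the preceding lemma only delivers a convex polyhedral decomposition of $f(F)$, whereas the derivative argument needs the stronger statement that each face of each $P_i$ lies along a singular hyperplane of $A_i$. This refinement should be extractable from the local cone-over-sphere / union-of-sectors structure invoked in the proof of the previous lemma (citing \cite[Corollary 6.2.3]{KleinerLeeb97}), but carefully bridging that local structure to pattern preservation of the derivative is the technical heart of the argument.
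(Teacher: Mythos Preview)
Your approach is substantially more elaborate than the paper's and contains a genuine gap. The paper avoids differentiation entirely by exploiting a \emph{second} flat: choose maximal flats $F_1,F_2$ in $X$ with $F_1\cap F_2=S$ (possible since the building is thick). Injectivity of $f$ gives $f(S)=f(F_1)\cap f(F_2)$. By the preceding lemma each $f(F_\ell)$ is a finite union of top-dimensional convex polyhedra, each lying in a single apartment of $Y$; hence $f(S)$ is covered by finitely many pairwise intersections of apartments of $Y$. Such intersections are singular subflats, and since bi-Lipschitz maps preserve topological dimension the relevant pieces are $k$-singular subflats. That is the entire argument.

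In your single-flat route the step ``a measurable map from the interior of $Q_i$ into a finite discrete set is almost everywhere constant on each connected component'' is simply false (the indicator of a half-ball is a counterexample). What you would need is continuity, or some rigidity, of $x\mapsto Df_x$, and Rademacher provides neither. Worse, even granting that $Df_x(T_xS)$ takes only finitely many singular values a.e., this does not force $f(S\cap Q_i)$ into finitely many affine $k$-planes: a Lipschitz curve in $\RR^2$ whose velocity is, at a.e.\ point, either horizontal or vertical can trace an infinite staircase inside a single chart $Q_i$. The finiteness you are after does not come from differentiation on one flat; it comes from intersecting the images of two flats, which is precisely the paper's mechanism. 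The obstacle you yourself flag (pattern preservation of $Df_x$) is real but secondary---even if you grant it, the argument does not close.
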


\begin{proof}
Consider two flats whose intersection is the given $k$-singular subflat. The image of each flat under the bi-Lipschitz embedding is a finite union of convex polyhedra. The image of the $k$-singular subflat is the intersection of these two finite unions. We note that bi-Lipschitz maps preserve dimension. Hence, the image of a $k$-singular subflat is contained in a finite union of $k$-singular subflats. 
\end{proof}

We now deduce the local behavior of singular subflats in the following lemma. We note that this lemma can also be found in \cite[Corollary 6.2.3]{KleinerLeeb97}, although it is stated only for maximal dimension flats there.

\begin{lemma}\label{localcone-lemma}
Let $F$ be a flat in $X$, let $M$ be a $k$-dimensional singular subflat of $F$, and let $x$ be a point in $M$. Then there is a neighborhood of $x$ in $M$ such that the image of that neighborhood is a cone with vertex $f(x)$ over a sphere of dimension $(k-1)$.
\end{lemma}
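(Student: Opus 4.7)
The plan is to localize the preceding lemma and corollary: I would first show that, after shrinking, $f$ maps a neighborhood of $x$ in $M$ onto a finite union of $k$-dimensional convex polyhedra all meeting at $f(x)$, and then read off the cone structure from this polyhedral description.

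First, by the preceding corollary, $f(M)$ is contained in a finite union $\bigcup_{\alpha=1}^N S_\alpha$ of $k$-singular subflats of $Y$. Each $S_\alpha$ is an affine Euclidean $k$-space carrying a finite Weyl pattern, so it decomposes into finitely many closed convex polyhedra; call this finite collection $\mathcal{F}$. I would mimic the proof of the previous lemma (with $f(M)$ playing the role of the bi-Lipschitz flat and $\bigcup S_\alpha$ playing the role of the finite union of flats) to show that $f(M)$ is itself a finite union of members of $\mathcal{F}$: for any polyhedron $P \in \mathcal{F}$, the set $P \cap f(M)$ is closed in $P$; and if $y$ is an interior point of $P$ lying in $f(M)$, then $f^{-1}(y) \in M$ has a Euclidean neighborhood whose image under $f$ covers an open neighborhood of $y$ in $P$ (by Kleiner--Leeb's sector structure applied inside a maximal flat of $Y$ containing $S_\alpha$), forcing $P \subset f(M)$.

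Second, I would shrink the neighborhood of $x$ in $M$ so that its image under $f$ meets only those polyhedra $P_1, \dots, P_r$ from the resulting decomposition that contain $f(x)$. Each $P_i$ sits inside an affine $k$-flat and is, locally at $f(x)$, the cone with vertex $f(x)$ over its link at $f(x)$, a spherical polytope of dimension $k-1$. Hence the union $P_1 \cup \cdots \cup P_r$ is, locally at $f(x)$, the cone with vertex $f(x)$ over the union $L$ of these spherical links. Finally, since $M$ is locally Euclidean of dimension $k$ at $x$ and $f$ is a bi-Lipschitz embedding, the image of a small metric ball in $M$ around $x$ is a topological $k$-ball at $f(x)$, whose boundary is a topological $(k-1)$-sphere; this boundary must coincide with $L$ in the local cone description, completing the proof.

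The main obstacle I anticipate is extending the closed-and-open argument of the previous lemma from maximal bi-Lipschitz flats to the lower-dimensional bi-Lipschitz embedded flat $f(M)$. In particular, to justify the sector structure at an interior point $y \in P \cap f(M)$, one must invoke Kleiner--Leeb's local cone theorem inside an auxiliary maximal flat of $Y$ containing $S_\alpha$ and then carefully intersect with $S_\alpha$ to extract local conical pieces of $f(M)$ itself. Once this is in hand, the identification of the link as a topological $(k-1)$-sphere is a routine consequence of bi-Lipschitz invariance of topological dimension.
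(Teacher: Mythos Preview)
Your approach can be made to work, but the paper takes a much shorter route that sidesteps the obstacle you anticipate. Rather than building a polyhedral decomposition of $f(M)$ directly, the paper chooses a second maximal flat $F'$ with $M = F \cap F'$ and applies Kleiner--Leeb's local cone result to each of $F$ and $F'$ separately, which is exactly the setting in which that result is already available. This yields neighborhoods $U \subset F$ and $U' \subset F'$ of $x$ whose images $f(U)$ and $f(U')$ are cones with vertex $f(x)$. Since $f$ is injective, $f(U) \cap f(U') = f(U \cap U')$, and $U \cap U'$ is a neighborhood of $x$ in $M = F \cap F'$. The left side is an intersection of two cones with a common vertex, hence itself a cone; the right side is the bi-Lipschitz image of a $k$-ball, hence a topological $k$-disk. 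A cone which is a $k$-disk is a cone over a $(k{-}1)$-sphere, and the lemma follows in a couple of lines.

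Your intrinsic route, working only with $f(M)$, is a reasonable alternative, but the fix you propose for the ``open'' step is misdirected. Invoking ``Kleiner--Leeb's sector structure applied inside a maximal flat of $Y$ containing $S_\alpha$'' does not mean anything here: the Kleiner--Leeb cone/sector statement concerns the image of a maximal flat of the \emph{domain} under $f$, not flats sitting in the target. The correct justification is purely topological. After refining $\mathcal{F}$ so that near an interior point $y$ of any $P$ all the $S_\alpha$'s present coincide locally (subdivide along the boundaries of the pairwise intersections $S_\alpha \cap S_\beta$), the set $f(M)$ near $y$ is a topological $k$-disk lying in a single affine $k$-flat, and invariance of domain forces it to be open there. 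With that correction your argument goes through, but it is visibly longer than the paper's two-flat trick, which delivers the cone structure in one stroke and avoids the circularity risk of essentially re-proving the lemma inside its own proof.
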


\begin{proof}
Let $F_1$ be a flat such that $M=F\cap F'$. By \cite[Corollary 6.2.3]{KleinerLeeb97}, there are neighborhoods  $U\subset F$, and $U'\subset F'$ of $x$ such that $f(U)$ and $f(U')$ are cones over spheres with vertex x. Locally at $x$, the image of $M$ is contained in $f(U)\cap f(U')$. This intersection is locally a $k$-dimensional cone. Locally at $x$, the image of $M$ is a $k$-dimensional disk. Thus, the lemma follows.
\end{proof}

For $x\in X$, and $y\in Y$, we denote the tangent cones of $X$ and  $Y$ at $x$ and $y$ by $\Sigma_xX$ and $\Sigma_yY$ respectively. We have that $\Sigma_xX$ and  $\Sigma_yY$ are spherical buildings. If $f:X\to Y$ is biLipschitz, and $f(x)=y$ then $f$ induces a map $f_*:\Sigma_xX\to \Sigma_yY$. Moreover $f_*$ is a homeomorphism onto its image. Singular subflats at $x$ correspond to walls in $\Sigma_xX$. The induced map sends each wall into a union of walls.

We introduce the following terminology.
\begin{definition} A subset is said to be contained entirely in one (irreducible) factor if projections of the subset to all factors but one are points. We also say that a subflat maps into a factor if its image is contained entirely in a factor.
\end{definition}
We briefly mention the strategy how to show a bi-Lipschitz embedding decomposes into a product. First, we show that each irreducible subflat maps entirely into a factor. Next, we show that the restrictions of the embedding on flats decompose as product maps. Finally we show the consistency of decompositions of restrictions on flats that leads us to a decomposition of the embedding on entire space.\\ 
For the first step, we need different strategies whether the rank of a subflat is one or higher. We have the following lemma for rank one irreducible subflats.
\begin{lemma}\label{rk1-not-bend-lemma}
Let $f:X\to Y$ be a bi-Lipschitz embedding between real $\RR$-branched Euclidean buildings of the same rank. Then the image of each irreducible rank one subflat is contained entirely in one irreducible factor of $Y$. Moreover, the image of two irreducible rank one subflats of the same factor are contained in the same irreducible factor of $Y$.
\end{lemma}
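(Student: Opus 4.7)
The plan is to combine differentiation on a flat containing $\ell$ with the linear-algebraic Theorem~\ref{linearmap}, and then use the local cone structure of Lemma~\ref{localcone-lemma} and the connectedness of $\ell$ to upgrade the infinitesimal conclusion to the global statement. The second claim will follow from the first by exploiting the $\RR$-branched structure of the rank-one factor $X_i$ to join any two lines by a chain of overlapping lines.

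For the first claim, I would extend $\ell$ to a maximal flat $F = F_1 \times \cdots \times F_n$ with $F_i = \ell$, and apply Rademacher's theorem to $f|_F$. Combined with Fubini across the directions transverse to $\ell$, for almost every line $\ell'$ parallel to $\ell$ in $F$, the map $f$ is differentiable at almost every point of $\ell'$. By the corollary proved above in this section, singular subflats of $F$ map into finite unions of singular subflats of $Y$, so at such a differentiability point $x$ which is also regular for the bi-Lipschitz flat $f(F)$, the differential $Df_x \colon T_x F \to T_{f(x)} Y$ is an isomorphism of Euclidean spaces preserving Weyl patterns. Theorem~\ref{linearmap} then implies $Df_x$ sends the one-dimensional factor $T_{x_i} X_i$ into the tangent space of a single irreducible factor $Y_{j(x)}$, and combined with Lemma~\ref{localcone-lemma} the two local rays of $f(\ell')$ at $f(x)$ are opposite and both point along $Y_{j(x)}$. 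A connectedness argument along $\ell'$ then yields a single index $j(\ell')$ such that every projection $\pi_{j'} \circ f|_{\ell'}$ with $j' \ne j(\ell')$ is constant. To transfer this conclusion from almost every parallel line to $\ell$ itself, I would show that two nearby good parallel lines cannot have different target factors: if $j(\ell'_1) \ne j(\ell'_2)$, then a generic transverse segment joining them would map to a curve in $Y$ with endpoints in different irreducible factors, producing an incompatible splitting of the bi-Lipschitz image of a singular subflat of $F$. Hence $j$ is locally constant across parallel lines, and continuity of $f$ identifies this constant with the target factor for $\ell$ itself; vanishing derivatives of the other projections then force them to be constant on $\ell$.

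For the second claim, the $\RR$-branched structure of the rank one factor $X_i$ ensures that any two irreducible rank one subflats $\ell, \ell' \subset X_i$ can be joined by a finite chain of lines in which consecutive members share a nontrivial subsegment; along each shared segment the first part of the lemma forces the target factor to agree, so $f(\ell)$ and $f(\ell')$ lie in the same irreducible factor of $Y$. The main obstacle I anticipate is the transfer step in the middle paragraph: ruling out zig-zag behavior of $j(\ell')$ across the transverse family and carefully handling the non-regular points of $f(F)$ where the bi-Lipschitz image transitions between polyhedra, so that the local cone analysis genuinely globalises from almost every parallel line to $\ell$ itself.
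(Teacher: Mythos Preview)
Your proposal has a genuine gap, and it is not in the transfer step you flag but earlier, in the ``connectedness argument along $\ell'$''. The image $f(\ell')$ is a piecewise geodesic consisting of finitely many segments, each lying in a single irreducible factor of $Y$. At points where $f|_F$ is differentiable your argument correctly shows the tangent direction lies in a single factor $Y_{j(x)}$; but the only places where the factor index could jump are the finitely many \emph{bend points} of $f(\ell')$, and at a genuine bend $f|_{\ell'}$ (hence $f|_F$) is necessarily non-differentiable. So the differential argument is silent exactly where it matters, and connectedness of $\ell'$ does not help because the bend points disconnect the set of differentiability points along the line. Lemma~\ref{localcone-lemma} by itself only tells you the local picture at $f(z)$ is two rays; it does not force them into the same factor.

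The paper fills this gap with a local argument that does not use differentiability. At a bend point $z\notin S$, the map $f$ locally sends $F$ into a single flat $F'$, inducing a homeomorphism of spherical apartments $\Sigma_z F\to\Sigma_{f(z)}F'$. The orthogonal complement $F_1^\perp$ of the line $F_1=\ell'$ in $F$ has tangent sphere separating the two directions of $F_1$, so its image---a union of walls in $\Sigma_{f(z)}F'$---must separate the two tangent directions of $f(F_1)$. If those two directions lay in distinct irreducible factors they would be joined by an arc of length $\pi/2$ whose interior contains no singular point, hence cannot be separated by any union of walls; contradiction. This wall-separation argument in the spherical building is the missing idea.

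A smaller issue: for the second claim, chaining lines inside $X_i$ is not sufficient, since two irreducible rank one subflats of the same factor may differ in their coordinates in the other factors and hence be disjoint in $X$. The paper instead first shows that all parallel $X_i$-lines within a fixed flat $F$ go to the same factor (immediate from finite Hausdorff distance once each is known to land in a single factor), and then connects arbitrary flats by a chain in which consecutive flats share a half-flat.
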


\begin{proof}
First, we note that the number of rank one factors of $X$ is no smaller than the number of rank one factors of $Y$. Indeed, by the argument in \cite{FisherWhyte18, FisherNguyen}, in every flat, points of differentiability are generic. At these points, derivatives preserve Weyl patterns. By Theorem \ref{linearmap}, derivatives map irreducible factors into irreducible factors. If an irreducible factor of X has rank at least two then it can not map linearly and injectively into a rank one factor of Y. Thus, the number of rank one factors of $X$ cannot be smaller than the number of rank one factors of $Y$.

Let $F$ be an arbitrary flat in $X$. By \cite[Lemma 3.1]{FisherWhyte18}, outside of a co-dimension two subset $S$ of $F$, $f$ locally maps $F$ into a flat. We first show that $f$ maps each irreducible rank one subflat of $F$, which is disjoint from $S$, into a factor. Let $F_1$ be such an irreducible rank one subflat. We have that $f(F_1)$ is a union of finitely many (possibly infinite) geodesic segments in rank one subflats. Each rank one subflat must be contained in a factor. Thus, it suffices to show that two consecutive segments belong to the same factor. Let $z$ be the common end point of two consecutive segments. Since $z$ is not in $S$, there is a flat $F'$ containing $f(z)$ such that $f$ locally maps $F$ at $z$ into $F'$ at $f(z)$. Thus, $f$ induces a homeomorphism $\Sigma_zX\cap \Sigma_zF\to \Sigma_{f(z)}Y\cap \Sigma_{f(z)}F'$, where $\Sigma_zF$ and $\Sigma_{f(z)}F$ are tangent cones of $F$ and $F'$ at $z$ and $f(z)$ respectively. We denote by $F_1^\perp$ the co-dimension one singular subflat of $F$ at $z$. This co-dimension one subflat is also the orthogonal complement of $F_1$ in $F$. Then we have that $\Sigma_{f(z)}f(F_1)\cap \Sigma_{f(z)}f(F_1^\perp)=\varnothing$. The fact that $f(F_1^\perp)$ is a union of co-dimension one subflats implies that $\Sigma_{f(z)}f(F_1^\perp)$ is a union of co-dimension one walls in $\Sigma_{f(z)}F'$. Since $\Sigma_zF_1^\perp$ separates $\Sigma_xF_1$ in $\Sigma_xF$, we have that $\Sigma_{f(z)}f(F_1^\perp)$ separates $\Sigma_{f(z)}f(F_1)$. Now, if $f$ maps $F_1$ into two different factors at $z$ then $\Sigma_{f(z)}f(F_1)$ is a union of two points which is not in a joint factor of $\Sigma_{f(z)}F'$. It follows that these two points are connected by an arc of length $\frac \pi 2$. This arc does not contain any singular points in its interior, and hence cannot be separated by a union of walls. This is a contradiction. Thus, $f(F_1)$ does not change factors at $f(z)$. Therefore, $f(F_1)$ is contained entirely in a factor.

We now show that $f$ maps all irreducible rank one subflats in $F$, which are parallel to $F_1$, entirely into a factor. Every irreducible rank one subflat in $F$, that is disjoint from $S$ maps entirely into a factor by the above argument. Moreover, if the subflat is parallel to $F_1$ then it maps into the same factor as the image of $F_1$ since these two subflats are at finite Hausdorff distance apart. Let $\barr{F_1}$ be an arbitrary subflat parallel to $F_1$ in $F$. Since $S$ is of co-dimension two, $\barr{F_1}$ can be approximated by a sequence of subflats that are disjoint from $S$. The projections of  image every subflat in the sequence to all factors but one are just points. It follows that $f(\barr{F_1})$  has projections to all but one factors are just points, i.e., $f(\barr{F_1})$ is contained entirely in a factor.

Therefore, $f$ maps each irreducible rank one subflat in $F$ entirely into one irreducible factor of $Y$. The claim for general irreducible rank one subflats in $X$ follows from the fact that given two arbitrary flats in $X$, there is a finite sequence of flats in $X$ with two given flats being the first and last flats in the sequence and two consecutive flats in the sequence intersect in a half-flat.
\end{proof}

We can now decompose a bi-Lipschitz embedding as a product of embeddings into irreducible factors by the following theorem. The claim that each irreducible higher rank subflats maps entirely into a factor is also contained in the proof of the theorem.
\begin{theorem}\label{splitting-thm}
Let $X=X_1\times \dots \times X_n$ and $Y=Y_1\times \dots\times Y_m$ be $\RR$-branched Euclidean buildings without Euclidean factors, where $X_i, Y_j$ are irreducible factors. Let $f: X\to Y$ be a bi-Lipschitz embedding. There is a decomposition $X=X_1'\times \dots X_m'$ and $f =(f_1,\dots,f_m)$ where
\begin{enumerate}
\item $X_i'$ is a product of irreducible factors of $X$ and $\Rrank(X_i')=\Rrank(Y_i)$, for $i=1,\dots, m$, and
\item $f_i: X_i' \to Y_i$ is a bi-Lipschitz embedding for $i=1,\dots, m$.
\end{enumerate}
\end{theorem}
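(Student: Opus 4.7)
The plan is to combine the linear-algebra input of Theorem \ref{linearmap} with a differentiation argument on flats and a wall-combinatorics argument at non-differentiable points to show that every irreducible subflat of $X$ (of any rank) is mapped entirely into a single irreducible factor of $Y$; the product decomposition then follows by propagating this factor assignment across flats and assembling slices. The rank-one case is already handled by Lemma \ref{rk1-not-bend-lemma}, so the work is concentrated in the higher-rank case and in the globalization.

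First I would fix a flat $F \subset X$ and an irreducible subflat $F_0 \subset F$ lying in some higher-rank factor $X_{i_0}$. By the differentiation lemma cited in the proof of Lemma \ref{rk1-not-bend-lemma}, $f|_F$ is differentiable outside a subset $S$ of codimension at least two, and at each point of differentiability the derivative $T$ is a pattern-preserving linear map from $F$ into a flat of $Y$. By Theorem \ref{linearmap}, $T$ sends the irreducible direction $F_0$ into one irreducible factor of $Y$, so through the generic set $F \setminus S$ the local image of $F_0$ lies in a single factor. Next I would show that this local factor assignment is constant on $F_0$: for a candidate jump point $z \in F_0 \setminus S$, Lemma \ref{localcone-lemma} identifies $\Sigma_{f(z)} f(F_0)$ with a topological sphere of dimension $\Rrank(F_0)-1$, and $\Sigma_{f(z)} f(F_0^\perp)$ with a union of walls in the spherical building $\Sigma_{f(z)} Y$. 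A bending of $f(F_0)$ between two factors of $Y$ would force $\Sigma_{f(z)} f(F_0)$ to split as a disjoint union of pieces in distinct factors of the tangent flat, but in a spherical building the two factors meet along arcs of length $\pi/2$ whose interiors are regular and hence cannot be separated by any union of walls, contradicting the homeomorphism onto its image induced by $f$ on $\Sigma_z X \cap \Sigma_z F \to \Sigma_{f(z)} Y \cap \Sigma_{f(z)} F'$. This is the direct higher-rank analogue of the argument in Lemma \ref{rk1-not-bend-lemma}. The co-dimension-two exceptional set $S$ is then absorbed by approximating any subflat through $S$ by parallel subflats in $F \setminus S$, exactly as in the last step of Lemma \ref{rk1-not-bend-lemma}.

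Having established that each irreducible subflat of $X$ maps entirely into a single irreducible factor of $Y$, I would next propagate these assignments globally. Given two flats of $X$, there is a finite chain of flats in which consecutive flats share a half-flat, and each such half-flat contains an irreducible subflat parallel to each $X_i$-direction; so the assignment $i \mapsto \sigma(i)$ determined on one flat agrees with the assignment on the next, producing a well-defined map $\sigma \colon \{1,\dots,n\} \to \{1,\dots,m\}$. Setting $X_j' := \prod_{\sigma(i)=j} X_i$, the fact that generic derivatives of $f$ are injective pattern-preserving linear maps between flats of equal rank forces $\sigma$ to be surjective and $\Rrank(X_j') = \Rrank(Y_j)$ for every $j$. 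For any basepoint $x \in X$, write $x = (x_1',\dots,x_m')$ and note that for each $k$ the $X_k'$-slice through $x$ is a product of irreducible subflats, hence maps entirely into $Y_k$; moreover, translating along the complementary slice moves $f$ only in the $\prod_{j\ne k} Y_j$ directions (again by the irreducible-subflat statement applied to all $X_i$ with $\sigma(i)\ne k$). Thus the $Y_k$-coordinate of $f(x)$ depends only on $x_k'$, and defining $f_k := \pi_{Y_k} \circ f|_{X_k'\text{-slice}}$ yields $f = (f_1,\dots,f_m)$; each $f_k$ inherits the bi-Lipschitz constants of $f$ from the fact that coordinate projections on the product buildings are $1$-Lipschitz and that $f$ is bi-Lipschitz on its slices.

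The main obstacle I expect is Step 2, the higher-rank ``no-bending'' argument at points $z \in F_0$ where two candidate local images lie in different factors of $Y$. In the rank-one case of Lemma \ref{rk1-not-bend-lemma}, $\Sigma_{f(z)} f(F_0)$ is a pair of antipodal points and the join geometry of the spherical building makes the contradiction transparent; in higher rank, the sphere $\Sigma_{f(z)} f(F_0)$ has positive dimension and one must argue that the wall pattern produced by $\Sigma_{f(z)} f(F_0^\perp)$ cannot topologically separate a connected sphere sitting inside a factor-split tangent flat. The irreducibility of $X_{i_0}$ (via Theorem \ref{linearmap} applied to derivatives on both sides of the putative bend) and the $\RR$-branched hypothesis (needed so that there are enough walls through $z$ to make the local cone structure meaningful) are the two ingredients that together force the spherical image to avoid the join locus.
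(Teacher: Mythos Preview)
Your overall plan matches the paper's proof closely: differentiate on flats, invoke Theorem \ref{linearmap} at differentiability points, handle rank-one subflats via Lemma \ref{rk1-not-bend-lemma}, propagate the factor assignment across flats via chains sharing half-flats, and assemble the product map from slice projections. The globalization steps you outline are essentially identical to the paper's.

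The one place where you diverge—and where you overcomplicate matters—is the higher-rank no-bending step that you flag as the ``main obstacle.'' The paper does \emph{not} adapt the wall-separation argument of Lemma \ref{rk1-not-bend-lemma} to higher rank. Instead it argues directly: by Lemma \ref{localcone-lemma}, a neighborhood of $(w,u)$ in $F_i\times\{u\}$ maps to a cone over a sphere of dimension $\Rrank(F_i)-1\ge 1$; this cone sits inside a finite union of irreducible subflats, each lying entirely in one factor of $Y$ (this finite union comes from the density of differentiability points together with Theorem \ref{linearmap}); since subflats contained in distinct factors of $Y$ meet only at the cone vertex, the sphere—which avoids that vertex—decomposes into disjoint closed pieces, one per factor; but the sphere is \emph{connected}, so it lies entirely in a single factor. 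No join geometry, no $F_0^\perp$, and no wall-separation are needed. Your invocation of ``arcs of length $\pi/2$ whose interiors are regular'' is the rank-one mechanism and is both unnecessary and slightly confused here: such arcs connect points in different factors, but the sphere itself never contains them, precisely because its pieces in different factors are already disjoint. The step you worried about is in fact the easy one; the genuine work in the paper is concentrated in the rank-one case (Lemma \ref{rk1-not-bend-lemma}) and in Theorem \ref{linearmap}.
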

\begin{proof}

Let $F$ be a flat. We write $F=F_1\times \dots \times F_n$, a product of subflats of irreducible factors. By \cite[Lemma 3.1]{FisherWhyte18}, there is a set $\Sigma\subset F$ that is the complement of a co-dimension two subset, such that on $\Sigma$, $f$ locally maps $F$ to a flat. Since $f$ is bi-Lipschitz, it is differentiable a.e. on $\Sigma$. At a point of differentiability, the derivative $Df$ is a linear map preserving Weyl patterns. By Theorem \ref{linearmap}, $Df$ maps irreducible factors into irreducible factors. Thus, $f$ locally maps each irreducible subflat, at points of differentiability, into an irreducible subflat.

Consider an irreducible subflat of the form $F_i\times \{u\}\subset F$, such that almost every point in $F_i\times \{u\}$ is in $\Sigma$ and $f$ is differentiable a.e. on $F_i\times \{u\}$. By Lemma \ref{rk1-not-bend-lemma},  if the rank of $F_i$ is one, then $f(F_i\times \{u\})$ is contained entirely in one irreducible factor. We show that this is also true in the case where the rank of $F_i$ is greater than one.\\
Let the rank of $F_i$ be at least two. We have that $f(F_i\times \{u\})$ is contained in a union of finitely many subflats of the same dimension. Locally, at points of differentiability, $f$ maps $F_i\times \{u\}$ into a factor of $Y$. Since the set of points of differentiability of $f|_F$ is dense on $F_i\times \{u\}$, and at these points the subflats are contained in irreducible factors, we conclude that $f(F_i\times \{u\})$ is contained in a finite union of subflats in which each subflat in the union is contained entirely in a factor. Now, consider an arbitrary point $(w,u)\in F_i\times\{u\}$. By Lemma \ref{localcone-lemma}, there is a neighborhood of $(w,u)$ in $F_i\times \{u\}$ such that the image of the neighborhood is a cone over a sphere. Since $f(F_i\times \{u\})$ is contained in a finite union of subflats in which each subflat is contained entirely in a factor, this sphere is contained in the finite union of irreducible subflats. Furthermore, because rank of $F_i$ is at least two, this sphere is connected. Thus, it is contained entirely in one irreducible factor. It follows that the neighborhood of $(w,u)$ maps into one irreducible factor. Therefore, locally at any point, $F_i\times \{u\}$ maps into one factor. Hence, connectedness implies that the whole subflat $F_i\times \{u\}$ maps into one factor.

Let $F_i\times \{u_1\}$ and $F_i\times \{u_2\}$ be two parallel subflats such that almost every point in $F_i\times \{u_1\} \cup F_i\times \{u_2\}$ is in $\Sigma$ and $f$ is differentiable a.e. on $F_i\times \{u_1\} \cup F_i\times \{u_2\}$. Then each of the subflats $F_i\times \{u_1\}$ and $F_i\times \{u_2\}$ maps into one factor. Since two subflats have finite Hausdorff distance from each other, we conclude that they map into the same factor. Furthermore, any subflat $F_i\times \{u\}$ can be approximated by a sequence of parallel subflats $F_i\times \{u_n\}$, where almost every point in $F_i\times \{u_n\}$ is not in $\Sigma$, and is a point of differentiability. Every subflat $F_i\times \{u_n\}$ maps into one irreducible factor, for all $n$. Hence, $F_i\times \{u\}$ maps into the same factor as $F_i\times \{u_n\}$ does.

We can decompose, and possibly re-arrange, $X=X_1'\times \dots \times X_m'$ and $Y=Y_1\times\dots\times Y_m$ such that for all $1\le i\le m$,
\begin{itemize}
\item $Y_i$ is irreducible,
\item $X_i'= X_{i_1}\times \dots \times X_{i_{k(i)}}$ is a product of irreducible factors, with $\Rrank(X_i')= \\ \Rrank(Y_i)$,
\item if we write $F=F_1'\times \dots \times F_m'$, where $F_i'\subset X_i'$, then $f$ maps the singular subflat $F_i'$ into the factor $Y_i$.
\end{itemize}

We now show that every $X_i'$-subflat maps into the factor $Y_i$. Let $F_i''$ be an $X_i'$-subflat such that $F_i''$ intersects $F_i'$ in a half-subflat. Consequently, the flat $F'=F_1'\times\dots\times F_{i-1}'\times F_i''\times F_{i+1}'\times \dots F_m'$ intersects $F$ in a half-flat. We know that all parallel $X_i'$-factor subflats of $F$ map into a factor. The same is true for parallel $X_i'$-factor subflats of $F'$. Since there are $X_i'$-factor subflats of $F$ and $X_i'$-factor subflats of $F'$ intersecting in half-subflats, it follows easily that $X_i'$-factor subflats of $F$ and $X_i'$-factor subflats of $F'$ map into a same factor. Note that any two flats in a building can be connected by a sequence of flats in which any two consecutive flats intersect in half-flats. Therefore, we conclude that all $X_i'$-factor subflats of $X$ map into the same factor. Since $X_i'$-subflats of $F$ maps into the factor $Y_i$, we have that every $X_i'$-subflat in $X$ maps into the factor $Y_i$.

Next, we show that the map $f$ can be decomposed as a product map. Because $X_i'$-subflats map into the factor $Y_i$, we have that for $x_i\in X_i'$ ($i=1,\dots,m$) there exist $y_j(x_1,\dots,x_{i-1},F_i',x_{i+1},\dots,x_m)\in Y_j$ for $j\neq i$ such that \\$proj_{Y_j}(f(\{x_1\}\times \dots\times \{x_{i-1}\}\times F_i'\times \{x_{i+1}\}\times \dots \times \{x_m\})) = y_j$. On the other hand, if $F_i''$ is a flat in $X_i'$, which has nonempty intersection with $F_i'$, then\\ $proj_{Y_j}(f(\{x_1\}\times \dots\times \{x_{i-1}\}\times F_i''\times \{x_{i+1}\}\times \dots \times \{x_m\})) = y_j$. This is because the projection of the image of a $X_i'$-subflat to $Y_j$ factor is a point, and thus the projections of the images of two $X_i'$-subflats, with nonempty intersection on $Y_j$ factors, coincide. It follows that $y_j$ does not depend on the $X_i'$-coordinate. In other words, $y_j$ depends only on the $X_j'$-coordinate. Therefore, $f$ can be decomposed as a product map, i.e., there are map $f_i:X_i'\to Y_i$ for $i=1,\dots,m$ such that $f=(f_1,\dots,f_m)$.

In conclusion, after possibly re-arranging and re-indexing, we can decompose\\ $X=X_1'\times\dots\times X_m'$ and $f=(f_1,\dots, f_m)$ such that $f_i: X_i\to Y_i$ are bi-Lipschitz embeddings for $1\le i\le m$.
\end{proof}

We can now decompose general quasi-isometric embeddings between symmetric spaces and Euclidean buildings into products of embeddings into irreducible targets. The idea is to use the asymptotic cone argument, we can transfer the results for bi-Lipschitz embeddings between $\RR$-branched buildings into results for symmetric spaces and Euclidean buildings.
\begin{proof}[Proof of Theorem \ref{splitting-thm1}]
First we make the following claim: there is a constant $D(L,\\C,X,Y)$ such that for any subflat of the form $F_1\times \{u\}$, there exists an $i$ such that $1\le i\le m$ and $diam(proj_j (f(F_1\times \{u\}))< D $ for all $j\neq i$. Suppose this claim is not true, then there are irreducible $X_1$-subflats $S_n$ such that the projections of images of the subflats $S_n$ onto at least two factors have diameters tending to infinity. We choose a sequence of numbers $(c_n)$ such that $c_n$ is the minimum of $n$ and two diameters of projections of $f(S_n)$ onto two factors. It follows that the induced bi-Lipschitz map on asymptotic cones with respect to the sequence of rescalings $(c_n)$ and the sequence of centers on $(S_n)$, does not map an irreducible subflat entirely into an irreducible factor. This contradicts Theorem \ref{splitting-thm}. Moreover, the argument applies not only to $f$ but to all $(L,C)$-quasi-isometric embeddings. Hence the constant $D$ depends only on the constants $L,C$ and spaces $X$ and $Y$.

By the same argument as in the proof of Theorem \ref{splitting-thm}, we can decompose, possibly after rearranging, $X=X_1'\times \dots\times X_m'$ such that projections of the images of subflats of form $F_i'\times \{*\}$ onto factors $Y_j$ have finite diameters for all $j\neq i$. Thus, there are maps $f_i: X_i'\to Y_i$ for $i=1,\dots, m$, such that $f$ is at a bounded distance from the product map $(f_1,\dots, f_m)$. The fact that $f_i$ are quasi-isometric embeddings, for all $1\le i\le m$, follows from the fact that $f|_{X_i'}$ is a quasi-isometric embedding uniformly close to $f_i$.
\end{proof}

	Before proving the rigidity result in Corollary \ref{splitting-cor}, we recall two important facts from the classification of Weyl pattern preserving linear maps in \cite[Section 4]{FisherWhyte18}. Firstly, there is no Weyl pattern preserving linear map from a domain which is not of type $A$ domain into an irreducible target space of type $A$\cite[Corollary 4.10]{FisherWhyte18}. Secondly, a Weyl pattern preserving linear map into an irreducible space can be non-conformal only if the domain has $A$-type factor while the target is either not of type $A$ or of type $A$ with a different rank \cite[Lemma 4.6, Lemma 4.7, Corollary 4.10]{FisherWhyte18}.

\begin{proof}[Proof of Corollary \ref{splitting-cor}]
By Theorem \ref{splitting-thm1}, $f$ splits into a product of quasi-isometric embeddings. We need to show that each embedding factor is at a bounded distance from an isometric embedding, up to rescaling. Note that --$f$ induces a bi-Lipschitz map on asymptotic cones $[f]:[X]\to [Y]$. By Theorem \ref{splitting-thm1}, we have a decomposition $X=X_1'\times\dots\times X_m'$ such that $f_i:X_i'\to Y_i$ is a quasi-isometric embedding for $1\le i\le m$. We look at asymptotic cones and the induced map $[f]=([f_1],\dots,[f_m])$. Consider one factor map. We note that if we have a pattern preserving linear map into a type $A$ pattern, then the domain must have at least one factor of type $A$. Since the collection of type $A$ factors of the domain and the target are the same, if the target of a factor map is of type $A$ then the domain is also of type $A$ with the same rank. The assumption that $X$ and $Y$ have the same collection of type $A$ patterns (with multiplicity) also implies that any pattern preserving linear factor map from $[X_i']$ into $[Y_i]$ is conformal. Hence, by \cite[Theorem 1.8]{FisherWhyte18}, $f_i$ is at a bounded distance from an isometric embedding, up to rescaling.
\end{proof}

\section{Embedding of irreducible non-uniform lattices}
In this section, we study quasi-isometric embeddings of non-uniform lattices. We assume the setting of Theorem \ref{mainthm1}. Given an embedding of a non-uniform lattice, we get an embedding of a neighborhood of the lattice. However, the composition of this map and nearest point projection does not give a quasi-isometric embedding of the whole symmetric space. Thus, we can not directly use Corollary \ref{splitting-cor} from section \ref{Decomposition-section} and \cite[Theorem 1.4]{FisherNguyen} to obtain the rigidity of the embedding. Instead, we combine the ideas and proofs from section \ref{Decomposition-section} and arguments in \cite{FisherNguyen} to prove the result. The key point is that the irreducibility of the lattice ensure that there are enough flats and diverging patterns to run argument. We quote the results from \cite{FisherNguyen} which do not rely on Lie groups being simple and we refer to that paper for a detailed proof.

Let $\varphi: \Gamma\to \Lambda$ be a quasi-isometric embedding of irreducible non-uniform lattices. Then $\varphi$ induces a map $X\to Y$ by pre-composing with a closest point projection of $X$ to $\Gamma$ (pick one if there are more than one closest points) and post-composing with the inclusion $\Lambda\to Y$. We note that an irreducible higher rank lattice is quasi-isometrically embedded into the symmetric space by  \cite{LMR}. Abusing the notation, we still denote by $\varphi: X\to Y$ the induced map. This induced map is not a quasi-isometric embedding. However, an argument using ergodicity shows that on many flats, the induced map behaves almost like a quasi-isometric embedding.

We assume that the semisimple groups satisfy the assumptions of Theorem \ref{mainthm1}. The following proposition is obtained from Fisher-Nguyen's \cite{FisherNguyen}. We note that the assumption that $\Gamma$ is irreducible makes the Howe-Moore ergodicity theorem applicable.

\begin{proposition}\cite[Section 3.1]{FisherNguyen}\label{prop:sublinearflat}
Let $\delta >0$ and let $\omega$ be a non-principal ultrafilter. There exists a non-increasing function $\theta_\delta$ converging to $0$ and a full measure family $\mathcal F$ of sub-$\theta_\delta$-diverging flats in $X$ such that for any sequence of flats $F_n$ that are sub-$\theta_\delta$-diverging w.r.t. $x_n$, and any sequence $c_n$ with $\lim\limits_\omega c_n=+\infty$, $\varphi$ induces a bi-Lipschitz map $[\varphi]|_{[F_n]}$from $[F_n]$ into $\Cone(Y_n, \varphi(x_n),c_n,\omega)$. The claim still holds true for sequences of unions of finitely many intersecting flats. Furthermore, for any union of finitely many hyperplanes in $[F_n]$, we could choose a union of finitely many flats intersecting $[F_n]$ in the union of hyperplanes.\qed
\end{proposition}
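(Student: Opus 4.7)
The plan is to follow the template of cusp-excursion analysis for non-uniform lattices, combining reduction theory with quantitative non-divergence and the LMR undistortion theorem. First, I would parametrize flats through a fixed basepoint by cosets of the maximal compact subgroup of $G$, so that the measure-theoretic notion of a ``full measure family of flats'' is inherited from Haar measure on $G$ and descends to $\Gamma\backslash G$. The sub-$\theta$-diverging condition admits a clean quantitative formulation: on a flat $F$ through $x$, the portion of $F \cap B(x,R)$ whose image in $\Gamma\backslash X$ lies deeper than $\theta(R)\cdot R$ in the cusps should have sublinear diameter in $R$.

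To produce $\theta_\delta$ and $\mathcal{F}$, I would invoke Kleinbock--Margulis-type quantitative non-divergence together with the Howe--Moore mixing theorem. The irreducibility of $\Gamma$ in a higher rank semisimple group is exactly what makes Howe--Moore applicable. These tools yield a non-increasing function $\theta_\delta \to 0$ and a full measure family $\mathcal{F}$ of flats whose $A$-trajectories in $\Gamma\backslash G$ spend at most a $\theta_\delta$-fraction of their time at any given cusp-depth scale. The parameter $\delta$ encodes how tightly one demands the decay; taking $\delta$ smaller strengthens the non-divergence requirement and shrinks $\mathcal{F}$ without sacrificing full measure.

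With $\mathcal{F}$ in hand, I would verify the bi-Lipschitz claim as follows. By LMR the lattice $\Gamma$ is undistorted in $X$: on the thick part of $\Gamma\backslash X$ the closest-point projection $X \to \Gamma$ moves points by a bounded amount, while inside the depth-$r$ part of a cusp the discrepancy is $O(r)$. For a sub-$\theta_\delta$-diverging flat $F_n$ through $x_n$, the subset of $F_n \cap B(x_n, c_n)$ where the discrepancy between $\varphi$ and a genuine quasi-isometric embedding exceeds $\theta_\delta(c_n)\cdot c_n$ has size $o(c_n)$. After dividing by $c_n$ and taking the $\omega$-ultralimit this error collapses to zero, so $[\varphi]|_{[F_n]}$ is well-defined as a map into $\Cone(Y_n, \varphi(x_n), c_n, \omega)$, with the matching upper and lower Lipschitz bounds supplied by LMR on the thick portion. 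Finite unions of intersecting flats are handled by replacing $\theta_\delta$ by a bounded multiple of itself; the final refinement, where a prescribed union of hyperplanes in $[F_n]$ must be hit by a chosen union of flats meeting $F_n$ along those hyperplanes, follows because the sub-$\theta_\delta$ condition still cuts out a full-measure subset inside the slice of flats that cross $F_n$ transverse to the prescribed singular directions.

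The main obstacle is the second step: making the non-divergence uniform so that $\theta_\delta$ depends only on $\delta$ and on the spaces, not on the individual flat, while remaining non-increasing and tending to zero. This is precisely the delicate input supplied by the Kleinbock--Margulis argument packaged in Fisher--Nguyen. A secondary subtlety is that the sublinear-error estimate must pass uniformly along the sequence $F_n$, which is where the full-measure statement is used: typical flats drawn from $\mathcal{F}$ all inherit the same quantitative decay, allowing the ultralimit to commute with the cusp-projection error.
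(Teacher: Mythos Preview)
The paper does not give its own proof of this proposition: it is stated with a \texttt{\textbackslash qed} and attributed to \cite[Section 3.1]{FisherNguyen}, with only the remark that irreducibility of $\Gamma$ is what makes Howe--Moore applicable. So the relevant comparison is between your sketch and the argument in Fisher--Nguyen as summarized here.

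Your overall architecture --- control cusp excursions of flats, use LMR undistortion to convert sublinear cusp depth into a sublinear error for $\varphi$ restricted to the flat, then pass to the ultralimit --- is exactly the scheme the paper invokes. Where you diverge is in the mechanism for producing $\theta_\delta$ and the full-measure family $\mathcal F$. You propose Kleinbock--Margulis quantitative non-divergence; the paper (and Fisher--Nguyen) instead describes the construction as ``repeated use of the ergodic theorem,'' with Howe--Moore supplying ergodicity of the relevant abelian subgroups on $\Gamma\backslash G$. This is not merely a cosmetic difference. Kleinbock--Margulis is a statement about polynomial or unipotent trajectories, whereas the flats here are orbits of the Cartan subgroup $A$, whose non-divergence is not governed by the KM machinery. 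The pointwise ergodic theorem for the $A$-action (or for one-parameter diagonal subgroups), applied to the indicator of a compact set $\Omega_\delta \subset \Gamma\backslash G$, is the natural way to get that a full-measure set of flats spends most of its time in $\Omega_\delta$, and iterating over a sequence of such sets produces the function $\theta_\delta$. Your formulation of the sub-$\theta_\delta$ condition in terms of ``sublinear diameter'' is also slightly off: the paper's version is a \emph{measure-proportion} condition on the ball $B(x,r)\cap F$, which is what the ergodic theorem delivers directly.

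In short, the strategy is right, but the specific dynamical input you name would need to be swapped for the pointwise ergodic theorem applied to the $A$-action; once that substitution is made your outline agrees with the cited argument.
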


The definition of a sub-$\theta_\delta$-diverging flat is technical as it makes repeated use of the ergodic theorem. We refer to  \cite[Section 3.1]{FisherNguyen} for a precise definition. Roughly speaking, a flat is sub-$\theta_\delta$-diverging with respect to a fixed point if the proportion of the measure of the subset of points in the ball of radius $r$ around the fixed point of the distance at least $\theta_\delta(r)r$ away from the lattice $\Gamma$ goes to zero as $r$ goes to infinity.

\begin{proposition}\label{flat2flat-in-cones}
For $F_n\in \mathcal F$ as in Proposition \ref{prop:sublinearflat}, $[\varphi]|_{[F_n]}$ is a product map into $\Cone(Y, \varphi(x_n),c_n,\omega)$, and $[\varphi]|_{[F_n]}([F_n])$ is a flat. Moreover, each factor of $[\varphi]|_{[F_n]}$, up to a rescaling, is a map given by an element in the Weyl group.
\end{proposition}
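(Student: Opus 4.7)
The plan is to transfer the techniques of Section \ref{Decomposition-section} to the restriction $[\varphi]|_{[F_n]}$. By Proposition \ref{prop:sublinearflat}, this restriction is a bi-Lipschitz embedding from the single flat $[F_n]$ into $\Cone(Y,\varphi(x_n),c_n,\omega)$, which is an $\RR$-branched Euclidean building. The flat $[F_n]$ carries its Weyl pattern from $X$, and the extension property in Proposition \ref{prop:sublinearflat}, that any finite union of Weyl hyperplanes in $[F_n]$ arises as the intersection of $[F_n]$ with a union of flats mapped bi-Lipschitzly by $[\varphi]$, is precisely what allows the local/branching arguments of Section \ref{Decomposition-section} to work even though the domain is just a flat rather than a full building.

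First I would establish the product decomposition. Since $[\varphi]|_{[F_n]}$ is bi-Lipschitz, it is differentiable almost everywhere on $[F_n]$; at such a point, as in \cite[Lemma 3.1]{FisherWhyte18} together with the flat-extension property, the derivative preserves the Weyl pattern. By Theorem \ref{linearmap}, the derivative sends each irreducible factor of $[F_n]$ into an irreducible factor of the target cone. One then repeats the argument of Theorem \ref{splitting-thm} essentially verbatim: Lemma \ref{rk1-not-bend-lemma} applied inside $[F_n]$ handles rank one irreducible subflats, while Lemma \ref{localcone-lemma} together with connectedness of the spheres handles higher rank irreducible subflats. Parallelism and density of differentiability points then show that every irreducible subflat of $[F_n]$ maps into a single irreducible factor of the target cone. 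This yields the desired decomposition $[\varphi]|_{[F_n]}=([\varphi]_1,\dots,[\varphi]_m)$.

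Next, I would identify each factor. Each $[\varphi]_i$ is a bi-Lipschitz embedding of a singular subflat of $[F_n]$ (a product of irreducible subflats) into an irreducible factor of the target cone. Using the hypothesis of Theorem \ref{mainthm1} that the collections of type $A$ Weyl patterns of irreducible factors of $G$ and $G'$ coincide with multiplicity, together with the classification of Weyl-pattern-preserving linear maps into irreducible targets \cite[Corollary 4.10]{FisherWhyte18}, the derivative of each factor at every differentiability point is conformal. Standard arguments, as in the proof of Corollary \ref{splitting-cor}, then promote this pointwise conformality to the statement that $[\varphi]_i$ is, up to a scalar, an isometric embedding of flats. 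In particular $[\varphi]_i([F_n]_i)$ is a flat, so $[\varphi]|_{[F_n]}([F_n])$ is a flat, and each $[\varphi]_i$ is realized, up to rescaling, by an element of the Weyl group sending the pattern on the source subflat into the pattern on the target flat.

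The main obstacle I expect is making sure the differentiation and product-splitting arguments of Section \ref{Decomposition-section} genuinely apply in this restricted setting, since a priori the bi-Lipschitz behavior is only known along the flats in $\mathcal{F}$ rather than on all of $[X]$. The key technical input that unlocks this is the clause in Proposition \ref{prop:sublinearflat} allowing one to lift finite unions of hyperplanes in $[F_n]$ to finite unions of flats; this supplies on demand the auxiliary flats needed to run the tangent-cone/spherical-building arguments at singular points of $[F_n]$ exactly as they were run in Section \ref{Decomposition-section}.
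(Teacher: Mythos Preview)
Your proposal is correct and follows essentially the same route as the paper's proof: both use the bi-Lipschitz property and the hyperplane-extension clause of Proposition \ref{prop:sublinearflat} to run the Section \ref{Decomposition-section} arguments on the single flat $[F_n]$, obtain the product decomposition via Theorem \ref{linearmap} and the splitting argument, and then invoke the type $A$ hypothesis to force conformality of each factor and conclude the image is a flat. Your account is in fact more explicit than the paper's (which compresses the product-splitting step to ``using the same argument as in Section \ref{Decomposition-section}''), and your final paragraph correctly identifies the one subtlety the paper leaves implicit.
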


\begin{proof}
The argument here is a combination of the argument in \cite[Section 3.1-3.2]{FisherNguyen} and the argument in Section \ref{Decomposition-section}. To simplify notation, we denote by $[Y]$ the asymptotic cone $\Cone(Y, \varphi(x_n),c_n,\omega)$.

We have that $[\varphi]|_{[F_n]}([F_n])$ is a bi-Lipschitz flat and thus is contained in a union of finitely many flats in $[Y]$. By picking other sequences of flats that have limits intersecting $[F_n]$ in hyperplanes, we conclude that the subflats of $[F_n]$ map into unions of finitely many subflats in $[Y]$. In $[F_n]$, outside of a co-dimension 2 subset, $[\varphi]|_{[F_n]}$ locally maps a flat to a flat. Using the same argument as in Section \ref{Decomposition-section}, we conclude that $[\varphi]|_{[F_n]}$ is decomposed as a product map. Now, the domain and the range of each factor map satisfy the condition that all linear maps from the Weyl pattern of the domain to the Weyl pattern of the range is conformal. Hence, each factor map sends flats to flats. It follows that $[\varphi]|_{[F_n]}([F_n])$ is a flat in $[Y]$.
\end{proof}

Applying Proposition \ref{flat2flat-in-cones} to the constant sequence consisting of a fixed flat $F\in \mathcal F$, we get that there is a decomposition $G=G_1''\times\dots\times G_m''$ such that each $G_i''$-subflat of $[F]$ maps into a $G_i'$-subflat of $[Y]$. We show that this decomposition of $G$ does not depend on the choice of $F\in \mathcal F$. We note that if $E\in \mathcal F$ such that $[F]$ intersects $[E]$ in either a half-flat, or a Weyl chamber, or a hyperplane, then the decompositions of $G$ obtained from Proposition \ref{flat2flat-in-cones}, with respect to the choice of flat $[F]$ and  the choice of flat $[E]$, agree. As $\mathcal F$ has full measure in the set of flats, any two flats in $\mathcal F$ can be connected by a chain of flats in $\mathcal F$ such that any two consecutive flats coarsely intersect in either a half-flat, a Weyl chamber, or a hyperplane. Therefore the decomposition is independent of $F\in \mathcal F$.

Now, we apply arguments in \cite[Section 3, Section 4]{FisherNguyen} to obtain the following proposition (which is a combination of Proposition 3.3, Corollary 3.8, Proposition 3.9 and Proposition 4.1 in \cite{FisherNguyen}). We note that $\Gamma$ is irreducible, hence all abelian subgroups corresponding to subflats acting ergodically on $\Gamma\backslash G$.

\begin{proposition}\label{image_of_flat:prop}
Let $F$ be a sub-$\theta_\delta$-diverging flat w.r.t. $x$. Then there is a flat $F'\subset Y$ such that all of the followings hold true
\begin{enumerate}
\item $\varphi(x)$ is at a bounded distance from $F'$ and $\varphi(F)$ is sub-linearly divergent from $F'$,
\item images of Weyl chambers with the vertex at $x$ are sub-linearly divergent from a finite union of (a fixed number of) Weyl chambers in $Y$ with the vertex at $\varphi(x)$.
\item images of a subflat in $F$ containing $x$ is sublinearly divergent from a subflat in $F'$ containing $\varphi(x)$.
\item The set of $y\in F$ such that $F$ is a sub-$\theta_\delta$-diverging flat w.r.t. $y$ has a large proportion of Lebesgue measure in $F$ and the images of such points are uniformly close to $F'$.
\end{enumerate}
\end{proposition}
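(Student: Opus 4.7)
The plan is to obtain each of the four clauses by combining Proposition \ref{flat2flat-in-cones} with a standard asymptotic-cone-to-coarse-geometry translation, following the strategy already developed in \cite{FisherNguyen}. Since the proposition is explicitly an amalgam of Propositions 3.3, 3.9, 4.1 and Corollary 3.8 of that paper, the goal is not to redo the work but to check that the new ingredient, Proposition \ref{flat2flat-in-cones}, slots cleanly into those arguments.

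For clause (1), I first apply Proposition \ref{flat2flat-in-cones} to the constant sequence $F_n=F$ with $x_n=x$, every sequence $c_n\to\infty$, and every non-principal ultrafilter $\omega$. Each application yields a bi-Lipschitz product map $[\varphi]|_{[F]}$ into $\Cone(Y,\varphi(x),c_n,\omega)$ whose image is a flat, with factors given by Weyl group elements up to rescaling. Running this over all $(c_n,\omega)$ and invoking the standard diagonalisation/compactness argument of \cite[Section 3]{FisherNguyen}, the qualitative ultrafilter statement upgrades to a single flat $F'\subset Y$ at bounded distance from $\varphi(x)$ such that $\operatorname{dist}(\varphi(y),F')/d_X(x,y)\to 0$ as $y\in F$ leaves every bounded set.

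Clauses (2) and (3) follow by applying the same argument to distinguished subsets of $[F]$. Because each factor of $[\varphi]|_{[F]}$ is a Weyl group element (up to rescaling), it maps Weyl chambers of the factor domain to Weyl chambers of the factor image, with at most a fixed finite number of images; taking the product gives that Weyl chambers of $[F]$ with vertex at the origin map into a finite, uniformly bounded union of Weyl chambers of the image flat. Translating back, Weyl chambers in $F$ with vertex $x$ are sub-linearly divergent from a bounded union of Weyl chambers of $F'$ with vertex $\varphi(x)$, which is (2). The same procedure with singular subflats through the origin of $[F]$ yields (3).

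For clause (4), I would use ergodicity exactly as in \cite[Section 3]{FisherNguyen}. Irreducibility of $\Gamma$ together with the Howe--Moore theorem ensures that the abelian subgroup of $G$ corresponding to $F$ acts ergodically on $\Gamma\backslash G$, which is precisely the input needed to show that sub-$\theta_\delta$-divergence with respect to $y$ holds for a density-$1$ set of $y\in F$ in large balls. Combining this with the sub-linear divergence from (1) forces the images $\varphi(y)$ at these good basepoints to lie within uniformly bounded distance of $F'$. The main technical point throughout is the uniformity of the constants in (1) and (2) across all choices of $(c_n,\omega)$, which is obtained by the usual contradiction argument against Proposition \ref{flat2flat-in-cones}; since this contradiction argument is carried out in detail in \cite{FisherNguyen}, I would simply cite it rather than reproduce it.
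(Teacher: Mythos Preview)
Your proposal is correct and matches the paper's approach: the paper's own proof is nothing more than a one-line citation to \cite[Proposition~3.3, Corollary~3.8, Proposition~3.9, Proposition~4.1]{FisherNguyen}, and your sketch is a faithful (and in fact more detailed) expansion of exactly how those four results, together with the new input of Proposition~\ref{flat2flat-in-cones}, combine to yield the four clauses.
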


\begin{proof}See \cite[Proposition 3.3, Corollary 3.8, Proposition 3.9, and Proposition 4.1]{FisherNguyen}.
\end{proof}

Now we are ready to prove Theorem \ref{mainthm1}.
\begin{proof}[Proof of Theorem \ref{mainthm1}]
Let $F\in \mathcal F$, and let $F'$ be the flat is obtained by applying Proposition \ref{image_of_flat:prop}. To simplify the exposition, we abuse notations and write $F'=\varphi(F)$. 

We decompose $G=G''_1\times\dots\times G''_m$ such that for any $F\in \mathcal F$, $[\varphi]|_{[F]}$ maps each $G''_i$-subflat of $[F]$ to a $G'_i$-subflat of $[\varphi(F)]$. Let $X_i'$ and $Y_i$ be symmetric spaces associated to $G_i''$ and $G_i'$.

Let $x\in F$ be a point such that $F$ and all $F_\alpha$ for $\alpha\in \Xi$ are sub-$\theta_\delta$-diverging w.r.t.~ $x$ and let $F'=\varphi(F)$, $F'_\alpha=\varphi[F_\alpha]$. We recall that the map $\varphi$ now is the composition of the original $\varphi$ and nearest point projection. If $F\in \mathcal F$ is sub-$\theta_\delta$-diverging w.r.t.~$x$ then $x$ is $R$-close ($R$ depends on $\Gamma$, $G$, and $\delta$) to $\Gamma$, hence $\varphi(x)$ is well defined up to some $(2LR+C)$-distance, i.e., independent of the choice of a closest point in the projection. We also recall that there is a subset $\Omega_\delta\subset \Gamma\backslash G$ which is contained in an $R$-neighborhood of $p(1)\in \Gamma\backslash G$, such that a flat $F\in \mathcal F$ is sub-$\theta_\delta$-diverging w.r.t.~ $x\in F$ if we can write $x=\pi(g)$ and $F=\pi(gA)$ for some $g\in \Gamma\Omega_\delta$ (See \cite[Section 3]{FisherNguyen}). We have the following lemma.

\begin{lemma}\label{proj-estimate-lemma}
If $g, g'\in \Gamma\Nbhd_1(\Omega_\delta)$ such that $d(proj_{G_i''}(g),proj_{G_i''}(g'))<1$ then\\ $d(proj_{G_i'}(\varphi(g)),proj_{G_i'}(\varphi(g')))<12LR+4D+4L+3C$.
\end{lemma}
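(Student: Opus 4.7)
The plan is to show this estimate by transferring the product decomposition of $\varphi$ on sub-$\theta_\delta$-diverging flats (established in Proposition \ref{flat2flat-in-cones} and Proposition \ref{image_of_flat:prop}) down to the level of individual points. The intuition is that since the decomposition $G=G_1''\times\dots\times G_m''$ is set up so that, on flats in $\mathcal{F}$, the $G_i'$-projection of $\varphi$ depends only on the $G_i''$-coordinate, moving between two points that share (approximately) the same $G_i''$-coordinate should not change the $G_i'$-projection of the image by much.

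First, I would reduce to elements of $\Gamma\Omega_\delta$. Write $g=\gamma h$ and $g'=\gamma' h'$ with $\gamma,\gamma'\in\Gamma$ and $h,h'\in\Nbhd_1(\Omega_\delta)$; replacing $h,h'$ by elements of $\Omega_\delta$ at distance at most $1$ costs at most $L+C$ in $\varphi$, and each element of $\Omega_\delta$ lies within distance $R$ of the identity in the cover, so $g,g'$ themselves are within $R+1$ of $\Gamma$. This is where factors of $LR$ will enter, once per change of basepoint from the original $g$ or $g'$ to a point lying on a chosen flat.

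Next, I would pick sub-$\theta_\delta$-diverging flats $F\ni g$ and $F'\ni g'$ from the full-measure family $\mathcal{F}$ (using the irreducibility of $\Gamma$ and Howe-Moore to guarantee such flats exist through points whose $G_j''$-orbits are generic in $\Gamma\backslash G$). Since the $G_i''$-projections of $g$ and $g'$ are close, I can connect $g$ to $g'$ by a short path composed of (i) translations within $F$ along the orthogonal complement to the $G_i''$-direction until the non-$i$ coordinates match those of a point $g^\ast$ lying in both $F$ and some auxiliary flat $F''\in\mathcal{F}$, (ii) translations within $F''$ that leave the $G_i''$-coordinate nearly fixed, and finally (iii) a similar pair of motions reaching $g'$ via $F'$. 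Along each segment, Proposition \ref{image_of_flat:prop} says $\varphi$ is sublinearly close to a flat map in $Y$, and Proposition \ref{flat2flat-in-cones} together with the diameter bound from the proof of Theorem \ref{splitting-thm1} implies that motion along the orthogonal-to-$G_i''$ directions of the flat shifts the $G_i'$-projection of the image by at most $D$ (this contributes the four copies of $D$, one for each transition across a flat). The small $G_i''$-displacement of size $\le 1$ contributes at most $L+C$ per instance, explaining the $4L+3C$ term.

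The main obstacle will be guaranteeing that the chain of flats $F$, $F''$, $F'$ can always be chosen inside $\mathcal{F}$ with the required pairwise intersections (in particular that the bridging flat $F''$ intersects both $F$ and $F'$ in subflats of the right dimension orthogonal to the $G_i''$-factor). This is where the ergodicity argument from \cite{FisherNguyen}, together with $\mathcal{F}$ having full measure in the space of flats and the ability from Proposition \ref{prop:sublinearflat} to select unions of finitely many flats in $\mathcal{F}$ that intersect along prescribed hyperplanes, will be indispensable. Once such a chain is in place, combining the triangle inequality with the four per-segment estimates of the form $LR+D+L+C$, plus three initial/final cost-$L+C$ corrections to pass between $g,g'$ and their closest $\Omega_\delta$-representatives, yields the constant $12LR+4D+4L+3C$.
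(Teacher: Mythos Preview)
Your intuition is right---the lemma rests on the fact that along $\widehat{G_i''}$-direction subflats of a flat in $\mathcal F$, the $G_i'$-projection of $\varphi$ is essentially constant---but your three-flat chain is more complicated than what the paper does and runs into a real obstruction at precisely the step you flag. The paper uses a \emph{single} flat: since $\mathcal F$ has full measure and the set of flats meeting two given positive-measure subsets of $G$ has positive measure, one can pick $tA$ with $\pi(tA)\in\mathcal F$ meeting both $\gamma\Omega_\delta$ and $\gamma'\Omega_\delta$, and take $\tilde g\in tA\cap\gamma\Omega_\delta$, $\tilde g'\in tA\cap\gamma'\Omega_\delta$. On this one flat the two complementary subflats through $\tilde g$ and $\tilde g'$ are parallel at Hausdorff distance $O(R)$ (by the hypothesis on the $G_i''$-projections, after the $O(R)$ shift from $g,g'$ to $\tilde g,\tilde g'$), so ergodicity of the complementary torus produces $h,h'\in\Gamma\Omega_\delta$ on them with $d(h,h')<4R$; since each complementary subflat has image $D$-close to a subflat whose $G_i'$-projection is a single point, the triangle inequality finishes.

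Your bridging flat $F''$, by contrast, must meet the $\widehat{G_i''}$-subflat of $F$ through $g$ and the $\widehat{G_i''}$-subflat of $F'$ through $g'$. These are fixed proper subflats, and the set of maximal flats meeting a given proper subflat is typically of measure zero in the space of flats (already in $\HH^2\times\HH^2$: a flat $\ell_1\times\ell_2$ meets $\{p\}\times\ell$ only if $p\in\ell_1$, a codimension-one constraint on $\ell_1$). Full measure of $\mathcal F$ therefore does not hand you $F''$, and Proposition~\ref{prop:sublinearflat} only supplies flats meeting a \emph{given} flat along prescribed hyperplanes, not flats simultaneously meeting prescribed subflats of two different flats. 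The single-flat device is exactly what removes this difficulty; once you see it, there is no need for a chain at all, and your accounting of the constants via ``four per-segment estimates'' becomes unnecessary.
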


We will prove this lemma after finishing the proof of Theorem \ref{mainthm1}. We note that $proj_{G_i''}(\Gamma\Nbhd_1(\Omega_\delta))=G_i''$ for all $1\le i\le m$ because $\Gamma$ is irreducible. We define a map $\varphi_i: G_i''\to G_i'$ as follow: for every $g_i\in G_i''$, we pick $g\in \Gamma\Nbhd_1(\Omega_\delta)$ such that $proj_{G_i''}(g)=g_i$. We define $f_i(g_i)=proj_{G_i'}(\varphi(g))$. By Lemma \ref{proj-estimate-lemma}, $\varphi_i$ is well-defined up a finite distance.

We need to show that $\varphi_i$ is a quasi-isometric embedding from $G_i''$ into $G_i'$. Indeed, for $g_{i}$ and $g_i'$ in $G_i''$, it follows from the ergodicity of the action of $\widehat{G_i''}=G_1''\times\dots\times G_{i-1}''\\\times G_{i+1}''\times \dots G_m''$ on  $\Gamma\backslash G$, that there are $g$ and $g'$ in $\Gamma\Omega_\delta$ such that \[d(proj_{G_i''}(g),g_i)<1, d(proj_{G_i''}(g'),g_i')<1,\] and \[d(proj_{\widehat{G_i''}}(g),proj_{\widehat{G_i''}}(g'))<1.\] Hence 
\[|d(g,g')-d(g_i,g_i')|<2.\]
On the other hand, there is a flat $F\in \mathcal F$ intersecting an $R$-neighborhoods of $g$ and $g'$ in $\Gamma\Omega_\delta$. To simplify notations, let us denote $C'=4(12LR+4D+4L+3C)$. By the triangle inequality and because $\varphi$ is a quasi-isometric embedding, we have that $$d(proj_{\widehat{G_i'}}(\varphi(g)),proj_{\widehat{G_i'}}(\varphi(g')))<C'.$$It follows that 
\[d(\varphi(g),\varphi(g'))-C'<d(proj_{{G_i'}}(\varphi(g)),proj_{{G_i'}}(\varphi(g')))=d(f_i(g_i),f_i(g_i'))<d(\varphi(g),\varphi(g')).\]
Since $\varphi$ is a quasi-isometric embedding on a neighborhood of $\Gamma$, it follows that $f_i$ is also a quasi-isometric embedding.\\
In the case that $G_i'$ has higher rank, by \cite{FisherWhyte18} $f_i$ is at a bounded distance from an isometric embedding. In that case, we can replace $f_i$ by that isometric embedding.

Finally, we show that the original quasi-isometric embedding is uniformly close to the product map $(f_1,\dots,f_m)$. Let $\gamma=(\gamma_1,\dots, \gamma_m)\in \Gamma$, there is $g=(g_1,\dots,g_m)$ which is in an $R$-neighborhood of $\gamma$ in $\Omega_\delta$. We have that \[L^{-1}R-C<d(\varphi(\gamma),\varphi(g))<LR+C,\]and \[L^{-1}R-C'-C<d(f_i(g_i),f_i(\gamma_i))<LR+C'+C.\]
As $\varphi$ and $(f_1,\dots,f_m)$ are uniformly close on $\Gamma\Omega_\delta$, it follows that $\varphi$ and $(f_1,\dots,f_m)$ are uniformly close on $\Gamma$.
\end{proof}
Now we prove Lemma \ref{proj-estimate-lemma}.
\begin{proof}[Proof of Lemma \ref{proj-estimate-lemma}]
Let $\gamma$  and$ \gamma'$ be elements of $ \Gamma$ such that $g\in \gamma \Nbhd_1(\Omega_\delta)$ and $g'\in \gamma'\Nbhd_1(\Omega_\delta)$. Since the family $\mathcal F$ has full measure in the set of flats, there is $t\in G$ such that $\pi(tA)$ is a flat in $\mathcal F$ and $tA$ has non-empty intersections with both $\gamma\Omega_\delta$ and $\gamma'\Omega_\delta$. Let $\tilde{g}\in tA\cap \gamma\Omega_\delta$ and $\tilde{g}'\in tA\cap \gamma'\Omega_\delta$. Denote by $A_i$ the $G_i''$-factor abelian subgroup of $A$. The two subflats $\pi(\tilde{g}A_i)$ and $\pi(\tilde{g}'A_i)$ are parallel in the flat $\pi(tA)$. The Hausdorff distance between the two subflats $\pi(\tilde{g}A_i)$ and $\pi(\tilde{g}'A_i)$ is bounded by $2R$. Since $\Gamma$ is irreducible, $A_i$ acts ergodically on $\Gamma\backslash G$. By the definition of $\Omega_\delta$, it follows that the sets $\tilde{g}A_i$ and $\tilde{g}'A_i$ contain a large portion of points belonging to $\Gamma\Omega_\delta$. Therefore, there are $h\in \tilde{g}A_j\cap \Gamma\Omega_\delta$ and $h'\in \tilde{g'}A_j\cap \Gamma\Omega_\delta$ such that $d(h,h')<4R$. It follows that $d(\varphi(h),\varphi(h'))<6LR+C$. On the other hand, because $\tilde{g},\tilde{g}'\in \Gamma\Omega_\delta$, there are $G_i'$-subflats $SF_i$ and $SF_i'$ in $Y$ such that the image under $\varphi$ of $\tilde{g}A_i\cap \Gamma\Omega_\delta$ and $\tilde{g}'A_i\cap \Gamma\Omega_\delta$ are $D$-close to $SF_i$ and $SF_i'$ respectively. We note that $SF_i$ and $SF_i'$ are $G_i'$-subflats, thus the projections of each of them to $G_i'$ factors are just points. Hence $d(proj_{G_i'}(\varphi(\tilde{g})),proj_{G_i'}(\varphi(h)))<2D$ and $d(proj_{G_i'}(\varphi(\tilde{g}')),proj_{G_i'}(\varphi(h')))<2D$. In conclusion, applying the triangle inequality we obtain
\[d(proj_{G_i'}(\varphi(g)),proj_{G_i'}(\varphi(g')))<12LR+4D+4L+3C.\]\end{proof}
 
\begin{proof}[Proof of Theorem \ref{mainthm2}]
When $G'$ is simple, the theorem is proved in \cite[Theorem 1.4]{FisherNguyen}. We assume that $G'$ is not simple. By Theorem \ref{mainthm1}, $\varphi$ is uniformly close to a product of isometric embeddings. By \cite[Section 6]{FisherNguyen}, we have that $\varphi$ is uniformly close to a group homomorphism from a finite index subgroup of $\Gamma$ into (possibly a conjugate by a commensurator of) $\Lambda$.
\end{proof}
\begin{remark}
Theorem \ref{mainthm2} can also be proved by the general scheme: first show good flats map to flats. Next, use \cite[Section 5]{FisherNguyen} to prove the regularity of the boundary map and obtain geometric rigidity. Finally, use \cite[Section 6]{FisherNguyen} to obtain the algebraic conclusion. In the case when $G$ is not simple, the argument presented here is simpler as it circumvents \cite[Section 5]{FisherNguyen}, which is the most difficult part in the case when $G$ is simple.
\end{remark}

\section{Non-rigid quasi-isometric embeddings}

\subsection{Combinatorial interpretation of $AN$-maps}In this subsection we give a way of interpreting $AN$-maps which does not use any algebraic structure. This is helpful in constructing quasi-isometric embeddings between Euclidean buildings.

We recall some definitions which are used in \cite{Leeb00}. Let $X$ be a symmetric space or Euclidean building. Then $\partial X$ is a spherical building.

\begin{definition}[Parallel Set and Cross Section]\cite[Definition 3.4]{Leeb00}
Given a singular unit sphere $s\subset \partial X$, the {\bf parallel set} $P(s)$ of $s$ is the union of flats or subflats which have $s$ as their boundaries at infinity. The parallel set is isometric to a product of a Euclidean space and a symmetric space or Euclidean building as
\[P(s)=\RR^{dim(s)+1}\times CS(s).\]
The symmetric space or Euclidean building $CS(s)$ is called the {\bf cross section} of $s$.
\end{definition}
We can also define a cross section of a simplicial cell in $\partial X$ as follow. Let $s\subset \partial X$ be a singular unit sphere. Let $c\subset s$ be a simplicial cell such that $\dim (c)=\dim (s)$. We define the cross section of $c$, denoted by $CS(c)$, to be the cross section of $s$. By \cite[Lemma 3.5]{Leeb00}, this definition is well-defined.
\begin{example}Let $X$ be the $A_2$-building associated with $\SL(3, \QQ_p)$. For any pair of opposite singular 0-cells $\zeta_-$ and $\zeta_+$ on $\partial X$, we then have that $CS(\zeta_-)$ and $CS(\{\zeta_-,\zeta_+\})$ are homothetic to $T_{p+1}$, where $T_{p+1}$ is the $(p+1)$-regular simplicial tree. Moreover, cross sections of all singular points in $\partial X$ are isometric.
\end{example}
\begin{example}
Let $X=\SL(3,\RR)/\SO(3)$, a symmetric space of type $A_2$. In $X$ there are many totally geodesic copies of the hyperbolic plane $H^2$. Each copy of $H^2$ can be identified with the cross section of two boundary points of a singular geodesic.
\end{example}

We recall the definition of spaces of strong asymptote classes from \cite[Section 2.1.3]{Leeb00}. For a point $\xi\in \partial X$, we consider the set of geodesic rays asymptotic to $\xi$. The asymptotic distance between two rays $\rho_1,\rho_2: [0,\infty)\to X$ is given by
\[d_\xi(\rho_1,\rho_2)=\inf\limits_{t_1,t_2\to\infty}d(\rho_1(t_1),\rho_2(t_2)).\]
The space of strong asymptote classes associated to $\xi$, denoted by $X_\xi$ is the space of geodesic rays asymptotic to $\xi$ quotiented out by the equivalence relation defined by zero $d_\xi$-distance. There is a natural projection $proj_\xi: X\to X_\xi$ by mapping each $x\in X$ to the asymptote class of the geodesic ray from $x$ to $\xi$. We note that this projection is distance non-increasing map. Moreover, if $\xi$ is an interior point of a $k$-dimensional cell $c\subset \partial X$, then $X_\xi$ is isometric to $\mathbb{R}^k\times CS(c)$. In particular, if $\xi$ is a singular $0$-cell then $X_\xi$ and $CS(\xi)$ are isometric.

\begin{example}\cite[Theorem 1.6]{FisherWhyte18} One of the simplest examples of an $AN$-map is a quasi-isometric embedding $f:H^2\times H^2 \to \SL(3,\RR)/SO(3)$. First, we identify $H^2$ with the set of upper triangular matrices in $\SL(2,\RR)$ and $\SL(3,\RR)/SO(3)$ with the set of upper triangular matrices in $\SL(3,\RR)$. The map can be given by an explicit formula as follows
\[f:\left(\begin{pmatrix} e^t & xe^{-t} \\0 & e^{-t} \end{pmatrix},
\begin{pmatrix} e^s & ze^{-s} \\0 & e^{-s}\end{pmatrix}\right)
\mapsto \begin{pmatrix}e^{\frac{2}{3}(t+s)} & xe^{\frac{2}{3}(s-2t)} &ze^{\frac{2}{3}(t-2s)} \\0 & e^{\frac{2}{3}(s-2t)} &0\\0 & 0 &e^{\frac{2}{3}(t-2s)}\end{pmatrix}.\]
We call a flat vertical if it is parametrized as  the set $$\left\{\left(\begin{pmatrix} e^t & xe^{-t} \\0 & e^{-t} \end{pmatrix},
\begin{pmatrix} e^s & ze^{-s} \\0 & e^{-s}\end{pmatrix}\right): t,s\in\RR\right\},$$ for some fixed $x,z\in\RR$. We note that in this example, each vertical maps to a flat, in which the vertical quadrant and its opposite map to a union of two Weyl sectors each. In particular, there is an open and dense subset of the Fursterberg boundary of $H^2\times H^2$ such that $f$ maps each chamber in the set to a union of two chambers in the Fursternberg boundary of $Y=\SL(3,\RR)/SO(3)$.

Let $\xi_1\in \partial Y$ and $\xi_2\in \partial Y$ be the endpoints of the geodesic rays $\left\{\begin{pmatrix}e^{\frac{2}{3}t} & 0 &0 \\0 & e^{\frac{-4}{3}t} &0\\0 & 0 &e^{\frac{2}{3}t}\end{pmatrix}: t\in [0,\infty)\right\}$ and $\left\{\begin{pmatrix}e^{\frac{2}{3}s} & 0 &0 \\0 & e^{\frac{2}{3}s} &0\\0 & 0 &e^{\frac{-4}{3}s}\end{pmatrix}:s\in [0,\infty)\right\}$, respectively. It can be checked that $proj_{\xi_1}\circ f$ maps each copy of $H^2$ of the form $H^2\times\{*\}\subset H^2\times H^2$ isometrically to $Y_{\xi_1}$. Similarly, $proj_{\xi_1}\circ f$ maps each copy $\{*\}\times H^2$ isometrically to $Y_{\xi_2}$.

On the other hand, if $F\subset Y$ is a flat such that $\xi_1,\xi_2\in \partial F$, then every point $x$ in $F$ is uniquely determined by $(proj_{\xi_1}(x),proj_{\xi_2}(x))$. Moreover, every point in the union of flats containing $\xi_1$ and $\xi_2$ on their boundaries is uniquely determined by projections to $Y_{\xi_1}$ and $Y_{\xi_2}$. Indeed, let $Y(\xi_1,\xi_2)$ be the union of all flats containing $\xi_1$ and $\xi_2$ on their boundaries. Let $x$ and $z$ be two arbitrary points in $Y(\xi_1,\xi_2)$. Two geodesic rays $[x,\xi_1)$ and $[x,\xi_2)$ bound a unique Euclidean sector $E_x$ which is isometric to union of two Weyl sectors. Similarly $[z,\xi_1)$ and $[z,\xi_2)$ bound a unique Euclidean sector $E_z$. If $(proj_{\xi_1}(x),proj_{\xi_2}(x))=(proj_{\xi_1}(z),proj_{\xi_2}(z))$ then $E_x=E_z$. It follows that $x=z$. Furthermore, $(proj_{\xi_1}, proj_{\xi_2}) : Y(\xi_1,\xi_2)\to Y_{\xi_1}\times Y_{\xi_2}$ is a quasi-isometry, and $Y(\xi_1,\xi_2)$ is quasi-isometrically embedded into $Y$ since $Y(\xi_1,\xi_2)$ is a union of vertical flats. Now, $f$ can be obtained by first identifying two factors $H^2$ in $H^2\times H^2$ with $Y_{\xi_1}$ and $Y_{\xi2}$ then composing it with $(proj_{\xi_1}, proj_{\xi_2})^{-1}$.
\end{example}
\begin{example}
We can now obtain a similar example of a quasi-isometric embedding $T_{p+1}\times T_{p+1}\to Y$, where $Y$ is an $A_2$ Euclidean building associated to $\SL(3,\QQ_p)$. Let $\xi_1$ and $\xi_2$ be two singular points in $\partial Y$ which are a combinatorial distance 2 apart. We have that $Y_{\xi_1}$ and $Y_{\xi_2}$ are isometric and are homothetic to $T_{p+1}$. The map $f=(proj_{\xi_1},proj_{\xi_2})^{-1}):T_{p+1}\times T_{p+1}\to Y$ is a quasi-isometric embedding. Abusing the notation, we call this embedding an $AN$-map.
\end{example}

A natural question which arises is whether all quasi-isometric embeddings are of the form of an $AN$-map. The answer is negative, as illustrated by the following example.

\begin{example}\cite[Theorem 1.7]{FisherWhyte18}\label{eg:DFW} There is an embedding $f$ from $H^2\times H^2$ into $\Sp(4,\RR)$, which is not of the form of an $AN$-map. 

Indeed, we can obtain such an embedding by composing two embeddings of the form of $AN$-maps. The $AN$-maps are $H^2\times H^2\to \SL(3,\RR)$ and $\SL(3,\RR)\to \Sp(4,\RR)$. We recall that to define an $AN$-map we have to fix a Weyl chamber at infinity to define the family of vertical flats. For the embedding $\SL(3,\RR)\to \Sp(4,\RR)$, we pick a Weyl chamber in $\partial \SL(3,\RR)$, that is disjoint from the boundary of the image of the first embedding $H^2\times H^2\to \SL(3,\RR)$, to define a vertical flat family. By examining the number of Weyl sectors that each Weyl sector maps to under the composition map, it can be shown that $f$ is not an $AN$-map.
\end{example}

We remark that in above example there is a flat mapping to a single flat. In fact, we can construct examples of quasi-isometric embeddings which are even more non-rigid: no flat maps to a finite neighborhood of a single flat. We have seen that showing flats map to flats is the first key step to obtain rigidity of quasi-isometries and quasi-isometric embeddings. Any attempt at showing that quasi-isometric embeddings are $AN$-maps may have to start with finding a family of flats that map to flats. The non-rigid examples we present below show that quasi-isometric embeddings can be wilder than what we expect naively. 
\begin{proof}[Proof of Theorem \ref{non-rigid-rank1-thm}]
Let $X$ be $T_3\times T_3$ in the first case and $H^2\times H^2$ in the second case. Let $Y$ be an $A_2$ Euclidean buildings in the first case and $\SL(3,\CC)/\SU(3)$ in the second case. Following Example \ref{eg:DFW}, the embeddings are defined by compositions of $AN$-embeddings.
\begin{enumerate}
\item Let $\xi\in\partial T_3$ and let $\varphi:T_3\to T_3$ be an quasi-isometric embedding such that $\xi$ is not in the boundary of the image. We note that the Furstenberg boundary of $T_3\times T_3$ is $\partial T_3\times \partial T_3$. We use $(\xi,\xi)$ in the Furstenberg boundary of $T_3\times T_3$ to define a family of vertical flats in $T_3\times T_3$. Let $g:T_3\times T_3\to Y$ be the $AN$-quasi-isometric embedding. We show that $f=g\circ (\varphi,\varphi)$ is the desired embedding. It is clear that $f$ is a quasi-isometric embedding as it is the composition of two quasi-isometric embeddings. We first note that under $g$, each Weyl chamber, that is opposite to $(\xi,\xi)$, maps to a union of two Weyl chambers in $\partial Y$. On the other hand, the embedding $(\varphi,\varphi)$ maps all chambers in $\partial (T_3\times T_3)$ to chambers opposite to $(\xi,\xi)$. It follows that $f$ maps each chamber in $\partial (T_3\times T_3)$ to a union of two chambers in $\partial Y$. Thus, f maps each flat in $T_3\times T_3$ to a set which is at a finite Hausdorff distance from a union of 8 Weyl sectors, which cannot be uniformly close to any single flat in $Y$.
\item We define the embedding in the same way as the previous case. Namely, pick $\xi\in \partial H^3$ and quasi-isometrically embed $H^2$ into $H^3$ avoiding $\xi$ on the boundary. Next, use $(x\i,\xi)$ to define an $AN$-map from $H^3\times H^3$ into $\SL(3,\CC)/\SU(3)$. We remark that the space of strong asymptote classes of a singular geodesic ray in $Y$ is isometric to $H^3$. The embedding is given by the composition $H^2\times H^2\to H^3\times H^3\to \SL(3,\CC)/\SU(3)$. Every flat in $H^2\times H^2$ maps to a set of finite Hausdorff distance from a union of 8 Weyl sectors, which cannot be a single flat in $Y$.
\end{enumerate}
\end{proof}
\begin{remark}We end with the following remarks.
\begin{enumerate}
\item Similar constructions work for higher rank symmetric spaces and Euclidean buildings. In particular, there are examples of quasi-isometric embeddings from product of $n$ 3-regular trees into rank $n$ Euclidean buildings for all $n\ge 2$ such that the embeddings do not map any flat into a neighborhood of a single flat.
\item A naive attempt to provide examples when the ranks and dimensions are higher, or between irreducible symmetric spaces and Euclidean buildings, does not follow easily. Furthermore, when the embeddings are between irreducible spaces, and we do not know any examples of embeddings other than compositions of $AN$-maps.
\end{enumerate}
\end{remark}


\begin{thebibliography}{LMR00}

\bibitem[Dru00]{Drutu00}
Cornelia Dru\c{t}u.
\newblock Quasi-isometric classification of non-uniform lattices in semisimple
  groups of higher rank.
\newblock {\em Geom. Funct. Anal.}, 10(2):327--388, 2000.

\bibitem[EF97]{EskinFarb97}
Alex Eskin and Benson Farb.
\newblock Quasi-flats and rigidity in higher rank symmetric spaces.
\newblock {\em J. Amer. Math. Soc.}, 10(3):653--692, 1997.

\bibitem[Esk98]{Eskin98}
Alex Eskin.
\newblock Quasi-isometric rigidity of nonuniform lattices in higher rank
  symmetric spaces.
\newblock {\em J. Amer. Math. Soc.}, 11(2):321--361, 1998.

\bibitem[FN]{FisherNguyen}
David Fisher and Thang Nguyen.
\newblock Quasi-isometric embeddings of non-uniform lattices.
\newblock {\em To appear Comment. Math. Helv.}

\bibitem[FS96]{FarbSchwartz}
Benson Farb and Richard Schwartz.
\newblock The large-scale geometry of {H}ilbert modular groups.
\newblock {\em J. Differential Geom.}, 44(3):435--478, 1996.

\bibitem[FW18]{FisherWhyte18}
David Fisher and Kevin Whyte.
\newblock Quasi-isometric embeddings of symmetric spaces.
\newblock {\em Geom. Topol.}, 22(5):3049--3082, 2018.

\bibitem[KL97]{KleinerLeeb97}
Bruce Kleiner and Bernhard Leeb.
\newblock Rigidity of quasi-isometries for symmetric spaces and {E}uclidean
  buildings.
\newblock {\em Inst. Hautes \'{E}tudes Sci. Publ. Math.}, (86):115--197 (1998),
  1997.

\bibitem[Lee00]{Leeb00}
Bernhard Leeb.
\newblock {\em A characterization of irreducible symmetric spaces and
  {E}uclidean buildings of higher rank by their asymptotic geometry}, volume
  326 of {\em Bonner Mathematische Schriften [Bonn Mathematical Publications]}.
\newblock Universit\"{a}t Bonn, Mathematisches Institut, Bonn, 2000.

\bibitem[LMR00]{LMR}
Alexander Lubotzky, Shahar Mozes, and M.~S. Raghunathan.
\newblock The word and {R}iemannian metrics on lattices of semisimple groups.
\newblock {\em Inst. Hautes \'{E}tudes Sci. Publ. Math.}, (91):5--53 (2001),
  2000.

\bibitem[Mos68]{Mostow68}
G.~D. Mostow.
\newblock Quasi-conformal mappings in {$n$}-space and the rigidity of
  hyperbolic space forms.
\newblock {\em Inst. Hautes \'{E}tudes Sci. Publ. Math.}, (34):53--104, 1968.

\bibitem[Pan89]{Pansu89}
Pierre Pansu.
\newblock M\'{e}triques de {C}arnot-{C}arath\'{e}odory et quasiisom\'{e}tries
  des espaces sym\'{e}triques de rang un.
\newblock {\em Ann. of Math. (2)}, 129(1):1--60, 1989.

\bibitem[Pra73]{Prasad73}
Gopal Prasad.
\newblock Strong rigidity of {${\bf Q}$}-rank {$1$} lattices.
\newblock {\em Invent. Math.}, 21:255--286, 1973.

\bibitem[Sch95]{Schwartz95}
Richard~Evan Schwartz.
\newblock The quasi-isometry classification of rank one lattices.
\newblock {\em Inst. Hautes \'{E}tudes Sci. Publ. Math.}, (82):133--168 (1996),
  1995.

\bibitem[Sch96]{Schwartz96}
Richard~Evan Schwartz.
\newblock Quasi-isometric rigidity and {D}iophantine approximation.
\newblock {\em Acta Math.}, 177(1):75--112, 1996.

\end{thebibliography}
\end{document}